\theoremstyle{plain}
\newtheorem{theorem}{Theorem}[section]
\newtheorem{lemma}[theorem]{Lemma}
\newtheorem{proposition}[theorem]{Proposition}
\newtheorem{corollary}[theorem]{Corollary}
\newtheorem{conjecture}{Conjecture}
\theoremstyle{definition}
\newtheorem{defn}{Definition}
\theoremstyle{remark}
\newtheorem{remark}{Remark}
\numberwithin{equation}{section}
\DeclareMathOperator*{\argmin}{arg\,min}
\newcommand{\R}{\mathbb{R}}
\newcommand{\e}{\varepsilon}
\newcommand{\NN}{\mathbb{N}}
\newcommand{\Rext}{\mathbb{R}\cup\{+\infty\}}
\newcommand{\RextM}{\mathbb{R}\cup\{-\infty\}}
\newcommand{\val}{\mathcal{H}}
\DeclareMathAlphabet{\mathbbb}{U}{bbold}{m}{n}
\crefname{theorem}{Theorem}{Theorems}
\crefname{lemma}{Lemma}{Lemmas}
\crefname{corollary}{Corollary}{Corollaries}
\crefname{section}{Section}{Sections}
\crefname{proposition}{Proposition}{Proposition}
\crefname{defn}{Definition}{Definitions}
\crefname{remark}{Remark}{Remarks}
\crefname{conjecture}{Conjecture}{Conjectures}
\title{Trade-off Invariance principle for minimizers of regularized functionals}
\author{Massimo Fornasier$^{1, 2, 3}$}
\email{massimo.fornasier@cit.tum.de}
\author{Jona Klemenc$^{1,2}$}
\email{jona.klemenc@tum.de}
\author{Alessandro Scagliotti$^{1,2}$}
\email{scag@ma.tum.de }
\address{$^1$CIT School, Technical University of Munich, Garching bei M\"unchen, Germany.}
\address{$^2$Munich Center for Machine Learning (MCML), Munich, Germany.}
\address{$^3$Munich Data Science Institute (MDSI), Munich, Germany}
\date{\today}
\begin{document}

\begin{abstract}
    In this paper, we consider functionals of the form  $H_\alpha(u)=F(u)+\alpha G(u)$ with $\alpha\in[0,+\infty)$, where $u$ varies in a set $U\neq\emptyset$ (without further structure).
    We first revisit a result stating that, excluding at most countably many values of $\alpha$, we have $\inf_{H_\alpha^\star}G= \sup_{H_\alpha^\star}G$, where $H_\alpha^\star \coloneqq \arg\min_UH_\alpha$, which is assumed to be non-empty.
    Then, we prove a stronger result that concerns the invariance of the limiting value of the functional $G$ along minimizing sequences for $H_\alpha$, which extends the above Principle to the case $H_\alpha^\star= \emptyset$.
    Moreover, we show to what extent these findings generalize to multi-regularized functionals and---in the presence of an underlying differentiable structure---to critical points.
    Finally, the main result implies an unexpected consequence for functionals regularized with uniformly convex norms: excluding again at most countably many values of $\alpha$, it turns out that for a minimizing sequence, convergence to a minimizer in the weak or strong sense is equivalent.
\end{abstract}

\maketitle



\section{Introduction}\label{sec:intro}
Minimization problems for functionals $H_\alpha\colon U\to\Rext$ of the form
\begin{equation}\label{eq:def_H}
    H_\alpha(u)\coloneqq F(u)+\alpha G(u),
\end{equation}
where $\alpha\in[0,+\infty)$, $U\neq\emptyset$, $F, G\colon U\to\Rext$ and $F+G\not \equiv +\infty$,
are ubiquitous in Inverse Problems~\cite{InverseProblems,tikhonov1998} and have enormous relevance for applications.
Without further assumptions on the involved objects---such as convexity of $H_\alpha$, which is rather difficult to have in practice---it is common wisdom that there is little we can say about the structure of the set of minimizers.
In particular, there might exist distinct minimizers $u^\star_1 \neq u^\star_2$.
In this case, despite the fact that both $u^\star_1$ and $u^\star_2$ achieve the infimum of $H_\alpha$,
we could expect them to attain distinct trade-offs between the competing terms $F$ and $G$.
In other words, we could expect that $F(u^\star_1) \neq F(u^\star_2)$ and that $G(u^\star_1) \neq G(u^\star_2)$.
However, in this paper, we shall show that such a variance of trade-offs between minimizers is exceptional, in the sense that it can only occur at a countable values of $\alpha$. We start with the simplest result, which applies to the situation described above.
\begin{theorem}[Trade-off Invariance Principle I]\label{thm:principle}
    Let $H_\alpha\colon U\to\Rext$ be defined as in~\eqref{eq:def_H} and let us define
    \begin{equation} \label{eq:def_argmin}
        U\supset H_\alpha^\star\coloneqq \arg\min_{u\in U} H_\alpha(u).
    \end{equation}
    If there exist $a<b\in[0,+\infty)$ such that $H_\alpha^\star \neq \emptyset$ for every $\alpha\in [a,b]$, then
    for all but countable $\alpha$ in $[a,b]$
    we have that
    \begin{equation} \label{eq:principle}
        \inf_{u\in H_\alpha^\star} G(u) =
        \sup_{u\in H_\alpha^\star} G(u).
    \end{equation}
\end{theorem}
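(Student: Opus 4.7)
The approach is to reduce the statement to a classical property of real-valued concave functions of a single variable. Consider the value function $h\colon [a,b] \to \R$ defined by $h(\alpha) := \min_{u \in U} H_\alpha(u)$, which is finite on $[a,b]$: indeed $H_\alpha^\star \neq \emptyset$ gives $h(\alpha) > -\infty$, while the hypothesis $F + G \not\equiv +\infty$ yields a $u_0 \in U$ with $F(u_0), G(u_0) \in \R$ and hence $h(\alpha) \leq F(u_0) + \alpha G(u_0) < +\infty$ for every $\alpha \in [a,b]$. For each fixed $u \in U$, the map $\alpha \mapsto F(u) + \alpha G(u)$ is affine (with values in $\Rext$); hence $h$, being the pointwise infimum of a family of affine functions, is concave on $[a,b]$.

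The crux of the argument is a geometric interpretation of every minimizer as a supporting line to $h$. For $\alpha \in [a,b]$ and $u^\star \in H_\alpha^\star$, the equality $H_\alpha(u^\star) = h(\alpha) \in \R$ forces $F(u^\star), G(u^\star) \in \R$ (when $\alpha > 0$; the endpoint $\alpha = 0$ is handled separately). For every $\beta \in [a,b]$ one has
\[
    h(\beta) \leq H_\beta(u^\star) = h(\alpha) + (\beta - \alpha)\, G(u^\star),
\]
so the affine map $\beta \mapsto h(\alpha) + (\beta - \alpha)\, G(u^\star)$ supports the graph of $h$ from above at $\alpha$. Consequently $G(u^\star)$ is pinched between the one-sided derivatives of the concave $h$, i.e., $h'_+(\alpha) \leq G(u^\star) \leq h'_-(\alpha)$.

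To conclude, I would invoke the classical fact that a finite concave function on an interval is differentiable outside an at most countable set $N \subset [a,b]$ (the one-sided derivatives are monotone, and monotone functions admit only countably many discontinuity points). For every $\alpha \in [a,b]\setminus N$, the squeeze $h'_+(\alpha) \leq G(u^\star) \leq h'_-(\alpha)$ collapses to $G(u^\star) = h'(\alpha)$, so \emph{all} elements of $H_\alpha^\star$ share the same value of $G$; this is precisely~\eqref{eq:principle}.

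The argument is structurally very clean, and the main substantive idea is the identification of $G(u^\star)$ with a supporting slope of the concave envelope $h$. The most delicate point I anticipate is the extended-real bookkeeping at the endpoint $\alpha = 0$ (where $0\cdot(+\infty)$ can affect what ``minimizer'' means) and the verification that $h$ stays real-valued throughout $[a,b]$; once this is settled, the remaining ingredients are textbook facts about concave functions of one real variable.
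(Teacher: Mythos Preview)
Your argument is correct. It differs, however, from the paper's own proof of this theorem. The paper proceeds by a direct Monotonicity Lemma: for $\alpha_1<\alpha_2$ one shows $\inf_{H_{\alpha_1}^\star} G \ge \sup_{H_{\alpha_2}^\star} G$ by comparing $H_{\alpha_1}$- and $H_{\alpha_2}$-minimizers, then observes that $R(\alpha):=\inf_{H_\alpha^\star}G$ is monotone and that any $\alpha$ with $\sup_{H_\alpha^\star}G>\inf_{H_\alpha^\star}G$ must be a jump of $R$. Your route instead introduces the concave value function $h(\alpha)=\min_U H_\alpha$, identifies each $G(u^\star)$ as a supergradient slope of $h$ at $\alpha$, and invokes the countability of non-differentiability points of a concave function. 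Both reductions ultimately rest on the same fact (monotone one-variable functions have at most countably many jumps), but they package it differently. Your version yields more: it pins down the common value as $G(u^\star)=h'(\alpha)$, which the paper only obtains later (Section on differentiability of the value function) and which is the viewpoint the paper attributes to Ito--Jin. The paper's proof, in turn, is slightly more self-contained in that it avoids any appeal to derivatives or supporting lines. Your caveats about $\alpha=0$ and the endpoints $a,b$ are harmless, since at most three points can be absorbed into the countable exceptional set.
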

\begin{remark}\label{rem:counterexample}
    As detailed in the proof, the values of $\alpha$ for which the conclusion of \cref{thm:principle} does not hold are included in the set of discontinuities of a monotone non-increasing function. Hence, it follows that they are at most countably many.
    Moreover, it is possible to construct examples where there are actually infinitely many instances of such values of $\alpha$.
    Namely, let us consider $U=[0,+\infty)$, $F(u)=\sum_{i=1}^\infty 2^{-i}(u-i+1)_+$ and $G(u)=-u$, where $(z)_+$ denotes the positive part of $z\in\R$.
    Then, setting $H_\alpha\coloneqq F+\alpha G$ with $\alpha\in [0,+\infty)$, it turns out that
    \begin{equation*}
        H^\star_\alpha =
        \begin{cases}
            \{0\}   & \alpha=\alpha^0,                   \\
            [k-1,k] & \alpha=\alpha^k,                   \\
            \{k\}   & \alpha\in (\alpha^k,\alpha^{k+1}), \\
        \end{cases}
    \end{equation*}
    where $\alpha^0\coloneqq 0$ and $\alpha^k\coloneqq \sum_{i=1}^k2^{-i}$ for every $k\geq1$. In particular, when $\alpha=\alpha^k$ for $k\geq1$ we obtain $-k = \inf_{u\in H^\star_\alpha}G(u) < \sup_{u\in H^\star_\alpha}G(u) = -k+1$. Finally, we observe that for $\alpha\in [1,+\infty)$ we have that $H_\alpha^\star = \emptyset$.
\end{remark}

Results in the vein of \cref{thm:principle} have been already observed in the Inverse Problem literature (see \cite[Lemma~2.6.3]{tikhonov1998}).
For instance, in \cite{ito2011aregularization} the authors were interested in  tuning the Tikhonov regularization parameter $\alpha$ according to noise level to obtain rates of convergence to true solutions. In the same paper, they related the identity in \eqref{eq:principle} to the points $\alpha\in[0,+\infty)$ where the concave `value function' $\mathcal{H}\colon [0,+\infty)\to \R\cup\{-\infty\}$ is differentiable, having set $\mathcal{H}(\alpha)\coloneqq \inf_U H_\alpha$ (see \cite[Theorem~2.1 and Corollary~2.1]{ito2011aregularization}).
Because their initial motivation came from inverse problems, in \cite{ito2011aregularization, ito2011multiparameter} the authors assumed Banach space structure for the domain $U$ and non-negativity for the regularizer $G$ (for the sake of the precision, in \cite{ito2011multiparameter} convexity and weak lower semi-continuity of the regularizer are further required). Yet, all these hypotheses are in fact not necessary, as made explicit in our formulation of Theorem \ref{thm:principle}.

Besides the immediate generalizations just mentioned, the main advancement of the present paper
concerns the extension of \cref{thm:principle} to the limiting values of $F$ and
$G$ along minimizing sequences.
In particular, the following extension of our argument allows us to avoid
the assumption that $H^\star_\alpha \neq \emptyset$, which is present in \cite{ito2011multiparameter, ito2011aregularization, tikhonov1998} and \cref{thm:principle}:

\begin{theorem}[Trade-off Invariance Principle II]\label{thm:principle_relaxed}
    For all but countable $\alpha \in [0,+\infty)$,
    if $\inf_{u\in U} H_{\alpha}(u) > -\infty$, then there exists
    some $G_\alpha \in [-\infty, +\infty]$ such that $G(u_i) \rightarrow G_\alpha$
    for every minimizing sequence $(u_i)_{i \in \NN}$
    of $H_\alpha$.
\end{theorem}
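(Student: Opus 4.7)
The natural object to study is the \emph{value function} $\val\colon [0,+\infty)\to\R\cup\{-\infty\}$ defined by $\val(\alpha):=\inf_{u\in U}H_\alpha(u)$. Since $\alpha\mapsto F(u)+\alpha G(u)$ is affine in $\alpha$ for every $u$ with $F(u),G(u)\in\R$, the value function $\val$ is concave on $[0,+\infty)$; moreover $\val(\alpha)<+\infty$ for every $\alpha$, because the assumption $F+G\not\equiv+\infty$ yields some $u_0\in U$ with $F(u_0),G(u_0)\in\R$ and hence $H_\alpha(u_0)\in\R$. Concavity of $\val$ implies that its effective domain $D:=\{\alpha:\val(\alpha)\in\R\}$ is an interval, that $\val$ admits finite one-sided derivatives $\val'_\pm$ on the interior $D^\circ$ (taken in $\R$), and that $\val$ is differentiable on $D^\circ$ outside an at most countable set.

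Next, I fix any $\alpha_0\in D^\circ$ at which $\val$ is differentiable, and any minimizing sequence $(u_i)_{i\in\NN}$ of $H_{\alpha_0}$. From $H_{\alpha_0}(u_i)\to\val(\alpha_0)\in\R$ it follows that $F(u_i),G(u_i)\in\R$ from some index on, and for every $\beta\geq 0$ one has
\[
    \val(\beta)\;\leq\; H_\beta(u_i)\;=\;H_{\alpha_0}(u_i)+(\beta-\alpha_0)\,G(u_i).
\]
For $\beta>\alpha_0$ this rearranges to a lower bound on $G(u_i)$, and for $\beta<\alpha_0$ to an upper bound (the inequality flipping upon division by $\beta-\alpha_0$). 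Taking $\liminf_i$ and $\limsup_i$, and then letting $\beta\to\alpha_0^\pm$, produces the sandwich
\[
    \val'_+(\alpha_0)\;\leq\;\liminf_{i\to\infty}G(u_i)\;\leq\;\limsup_{i\to\infty}G(u_i)\;\leq\;\val'_-(\alpha_0),
\]
which collapses to equality by differentiability of $\val$ at $\alpha_0$. Hence $G(u_i)\to G_{\alpha_0}:=\val'(\alpha_0)\in\R$, independently of the chosen minimizing sequence. No structure on $U$ is used anywhere, and the argument applies regardless of whether $H_{\alpha_0}^\star$ is empty.

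The excluded values of $\alpha$ are then the at most countably many kinks of the concave function $\val$ in $D^\circ$, together with the at most two boundary points of $D$ in $\R$; points with $\val(\alpha)=-\infty$ fall outside the hypothesis of the theorem and need no treatment. I expect the only delicate point to be the behaviour at an endpoint of $D$, where only one of the two sandwich inequalities above is available and so $G(u_i)$ may fail to converge, or converge only in $\{\pm\infty\}$; this is precisely why the statement allows $G_\alpha\in[-\infty,+\infty]$ rather than $G_\alpha\in\R$ and why these at most two boundary points must remain among the exceptions. The remainder is a clean exploitation of the classical fact that, at a differentiability point, all supporting slopes of a concave function coincide with its derivative.
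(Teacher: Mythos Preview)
Your argument is correct. The sandwich
\[
\val'_+(\alpha_0)\;\leq\;\liminf_{i\to\infty}G(u_i)\;\leq\;\limsup_{i\to\infty}G(u_i)\;\leq\;\val'_-(\alpha_0)
\]
follows exactly as you describe, and the exceptional set (kinks of the concave $\val$ in $D^\circ$ together with the at most two points of $D\setminus D^\circ$) is countable.

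Your route, however, differs from the paper's direct proof in \cref{sec:principle_relaxed_proof}. There the authors do not pass through the value function at all: they define $G_\alpha^+$ and $G_\alpha^-$ as the extreme limiting values of $G$ along minimizing sequences, prove by a bare-hands $\varepsilon$-argument (\cref{lem:monotone_G_relaxed}) that $G_{\alpha_1}^-\ge G_{\alpha_2}^+$ whenever $\alpha_1<\alpha_2$, and then observe that any $\alpha$ with $G_\alpha^-<G_\alpha^+$ is a jump of the monotone function $\alpha\mapsto G_\alpha^+$. Your approach is the one the paper develops only later, in \cref{subsec:diff_value}: \cref{lem:ident_derivatives} and \cref{prop:different_value} show precisely that $G_\alpha^-=\partial^+\val(\alpha)$, $G_\alpha^+=\partial^-\val(\alpha)$, and that the Trade-off Invariance Principle at $\alpha$ is equivalent to differentiability of $\val$ at $\alpha$. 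What you gain is conceptual clarity and the explicit identification $G_\alpha=\val'(\alpha)$ from the outset; what the paper's Section~3 argument gains is that it is entirely self-contained, needing no facts about one-sided derivatives of concave functions, and it isolates the Monotonicity Lemma as a reusable tool (indeed it is invoked again in \cref{sec:multi}).
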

As a relevant and, perhaps, unexpected example of applying the Trade-off Invariance Principle, we propose the next corollary. Let us first introduce the following notion:
\begin{defn} \label{def:weak2strong}
    Let $U$ be a  Banach space. A functional $G:U \to \R$ is called a \emph{weak-to-strong} functional if for every sequence $(u_i)_{i \in  \NN}$ weakly converging to some $u^\star$ such that $\lim_{i\to +\infty} G(u_i) = G(u^\star)$, then the sequence converges also strongly.
\end{defn}
Widely used instances of weak-to-strong functionals are norms in uniformly convex Banach spaces~\cite[Proposition~3.32]{B11}, or  more general strictly convex functionals as in~\cite{Vis84}.

\begin{corollary}\label{cor:strong_convergence}
    Let $U$ be a  Banach space, and let us consider $F\colon U \to \Rext$ such that $F\not\equiv +\infty$ and $G\colon U \to \R$ a \emph{weak-to-strong} functional.
    Then, for all but countable $\alpha \in (0, +\infty)$, the following holds: \\
    \noindent    If $(u_i)_{i \in \NN}$ is a minimizing sequence of the functional $H_\alpha \coloneqq F + \alpha G$,
    which converges \emph{weakly} to some minimizer $u^\star \in U$ of $H_\alpha$,
    then $(u_i)_{i \in \NN}$ converges \emph{strongly} to $u^\star$.
\end{corollary}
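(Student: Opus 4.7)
The plan is to chain together Theorem \ref{thm:principle_relaxed} with the very definition of a weak-to-strong functional, using the trivial observation that any minimizer produces a constant minimizing sequence.

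First I would invoke Theorem \ref{thm:principle_relaxed} to throw away the (countable) exceptional set of values of $\alpha \in (0,+\infty)$ and fix a ``good'' $\alpha$. For such an $\alpha$, I need to check that Theorem \ref{thm:principle_relaxed} actually applies, i.e.\ that $\inf_U H_\alpha > -\infty$. But if a minimizer $u^\star$ exists then $\inf_U H_\alpha = H_\alpha(u^\star) = F(u^\star) + \alpha G(u^\star)$; since $G$ is real-valued and $u^\star$ is a minimizer, $F(u^\star)$ must be finite (it cannot be $+\infty$ because $F\not\equiv +\infty$ and $G$ real-valued already produce a finite upper bound on the infimum), so $\inf_U H_\alpha \in \R$. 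Hence Theorem \ref{thm:principle_relaxed} yields some $G_\alpha \in [-\infty,+\infty]$ such that $G(u_i)\to G_\alpha$ along every minimizing sequence.

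Second, I would apply this to the \emph{constant} sequence $u_i \equiv u^\star$, which is clearly a minimizing sequence of $H_\alpha$. This pins down $G_\alpha = G(u^\star) \in \R$. Consequently, for the given weakly convergent minimizing sequence $(u_i)_{i\in\NN}$ with $u_i \weak u^\star$, we also have $G(u_i) \to G(u^\star)$. At this point, Definition \ref{def:weak2strong} applied to $(u_i)_{i\in\NN}$ delivers strong convergence $u_i \to u^\star$, concluding the proof.

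The argument is essentially a bookkeeping exercise once Theorem \ref{thm:principle_relaxed} is in hand; there is no serious obstacle. The only subtlety worth highlighting is the use of the constant sequence to identify the \emph{a priori} extended-real limit $G_\alpha$ with the finite value $G(u^\star)$; this is what allows the weak-to-strong hypothesis (which requires convergence of $G(u_i)$ to $G(u^\star)$ in $\R$) to be invoked, and it is the reason why the statement naturally excludes $\alpha = 0$ together with the Theorem \ref{thm:principle_relaxed} exceptional set.
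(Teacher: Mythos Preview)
Your proposal is correct and follows essentially the same route as the paper: fix a non-exceptional $\alpha$ from \cref{thm:principle_relaxed}, note that the existence of a minimizer $u^\star$ forces $\inf_U H_\alpha = H_\alpha(u^\star)\in\R$, use the constant sequence $u_i\equiv u^\star$ to identify $G_\alpha = G(u^\star)$, and then invoke \cref{def:weak2strong}. The only cosmetic difference is that the paper first records $F+G\not\equiv+\infty$ to meet the standing hypothesis of \cref{thm:principle_relaxed}, whereas you argue finiteness of $\inf_U H_\alpha$ directly from the existence of $u^\star$; both are fine.
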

\begin{proof}
    Since $F\not\equiv +\infty$, we have that $F+G \not \equiv +\infty$
    Let $\alpha\in (0,+\infty)$ be such that the conclusion of \cref{thm:principle_relaxed} holds --- we recall that the complement of this set in $(0,+\infty)$ is at most countable.
    Then, let us consider a minimizing sequence $(u_i)_{i \in \NN}$ for $H_\alpha$ that is converging weakly to $u^\star$, as in the statement.
    Since by hypothesis $\inf_u H_\alpha(u) = H_\alpha(u^\star)$, we can apply \cref{thm:principle_relaxed} to the constant sequence $(u_i')_{i \in \NN}$ with $u'_i=u^\star$ for every $i\in\NN$ and to $(u_i)_{i \in \NN}$
    to deduce that $\lim_{i \to \infty} G(u_i) = G_\alpha = G(u^\star)$.
    Recalling that $G$ is assumed to be a weak-to-strong functional (see \cref{def:weak2strong}), the thesis follows.
\end{proof}

\begin{remark}
    If we further assume that $H_\alpha$ is sequentially weakly lower semi-continuous, then \cref{cor:strong_convergence} implies that every weakly pre-compact minimizing sequence is actually strongly pre-compact for every but countably many $\alpha\in(0,+\infty)$.
    In certain applications, obtaining strong convergence is beneficial in order to lift properties of the minimizing sequence to its limiting minimizer. 
\end{remark}

\begin{remark}
    {Usually strongly convergent minimizing sequences can be obtained when the functionals are convex by means of Mazur's Lemma~\cite[Corollary~3.8]{B11}. The previous result is significantly stronger as it applies to more general functionals $F$ (also nonconvex), and it holds for any weakly convergent minimizing sequence. It is worth noticing that the sum $F+\alpha G$ is not required to be convex either.}
\end{remark}

\begin{remark}
    In some very special cases, the conclusion of \cref{cor:strong_convergence} was already accessible.
    Namely, for $F$ being the indicator function of a closed convex set $C$ in a Hilbert space $U$ and $G(u)=\|u-u_0\|_U$ being the distance from a given $u_0$,
    then the minimizer of $H_\alpha$ is the orthogonal projection of $u_0$ onto $C$.
    In this case \emph{any minimizing sequence}  of $H_\alpha$ is actually a Cauchy sequence as in the proof presented in~\cite[Theorem~5.2]{B11}.
\end{remark}

\begin{remark}
    Our initial interest in \cref{thm:principle} 
    was sparked by the minimizing movement scheme (see~\cite{greenbook}). Namely, in~\cite[Lemma 3.1.2]{greenbook}, for a metric space $U$ equipped with a distance $d$, for a functional $\phi\colon U \rightarrow \R$ and for a fixed $u \in U$, the authors consider the minimizing set
    \begin{equation*}
        J_\tau[u] \coloneqq \argmin_{v \in U} \left\{\phi(v) + \frac{1}{2\tau}d^2(v, u) \right\}.
    \end{equation*}
    Under the assumption that $J_\tau[u] \neq \emptyset$ for $\tau \in (0, \tau_\circ]$ for some $\tau_\circ$,
    they show that, for all but countable $\tau \in (0, \tau_\circ]$,
    \begin{equation*}
        \inf_{u_\tau \in J_\tau[u]} d(u_\tau, u) = \sup_{u_\tau \in J_\tau[u]} d(u_\tau, u).
    \end{equation*}
\end{remark}

In the following, we first give a simple proof of \cref{thm:principle} in \cref{sec:principle_proof}.
In \cref{sec:principle_relaxed_proof}, we show a slightly more technical, but still elementary proof of \cref{thm:principle_relaxed}. Then, we use \cref{thm:principle_relaxed} to prove a refined version of
\cref{thm:principle} stated in \cref{thm:principle_refined}, which shows that \cref{thm:principle_relaxed} is indeed a generalization of \cref{thm:principle}.
Moreover, we present in \cref{sec:multi} the extension of the Principle to multi-regularized functionals (see \cref{thm:multi_reg}), generalizing the results of \cite{ito2011multiparameter} that required the existence of minimizers \cite[Assumption~2.1]{ito2011multiparameter}.
In \cref{sec:critical}, assuming that the functional $H_\alpha$ is differentiable and satisfies the local {\L}ojasiewicz inequality, we generalize the Principle to the critical points of $H_\alpha$ in \cref{thm:crit_points}.
Finally, in \cref{sec:consequences} we collect some interesting applications of the main findings of this work.
Among others, we report that in \cref{subsec:diff_value} we managed to prove that the value function $\val$ is differentiable at $\alpha$ if and only if the Trade-off Invariance Principle for minimizing sequences holds at $\alpha$.
This shows that the results contained in \cite{ito2011multiparameter, ito2011aregularization} can be established also without the assumption $\arg\min_U H_\alpha \neq \emptyset$.


\section{Derivation of \texorpdfstring{\cref{thm:principle}}{Theorem~\ref{thm:principle}}}\label{sec:principle_proof}

We first establish a key-lemma.
We report that it can be found in \cite[Lemma~2.6.1]{tikhonov1998}, but we present its proof for the sake of completeness.

\begin{lemma}[Monotonicity Lemma I]\label{lem:monotone_G}
    Let $H_\alpha\colon U\to\Rext$ and $H_\alpha^\star$ be defined as in~\eqref{eq:def_H} and~\eqref{eq:def_argmin}, respectively, and let us consider $\alpha_2 > \alpha_1$ such that $H_{\alpha_i}^\star \neq \emptyset$ for $i=1,2$.
    Then, we have that
    \begin{equation} \label{eq:monotone_G}
        \inf_{u\in H_{\alpha_1}^\star} G(u) \geq
        \sup_{u\in H_{\alpha_2}^\star} G(u).
    \end{equation}
\end{lemma}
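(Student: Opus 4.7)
The plan is to apply the standard cross-minimization (or ``exchange'') argument: pick any $u_1 \in H_{\alpha_1}^\star$ and any $u_2 \in H_{\alpha_2}^\star$, use each as a competitor for the other's minimization, and obtain the inequality $G(u_2) \leq G(u_1)$ for every such pair. Since $u_1$ and $u_2$ are arbitrary in their respective argmin sets, this immediately yields $\sup_{H_{\alpha_2}^\star} G \leq \inf_{H_{\alpha_1}^\star} G$, which is exactly~\eqref{eq:monotone_G}.

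In detail, I would first observe that the hypothesis $F + G \not\equiv +\infty$ forces $H_\alpha \not\equiv +\infty$ for every $\alpha \in [0,+\infty)$, so any $u_i \in H_{\alpha_i}^\star$ satisfies $H_{\alpha_i}(u_i) < +\infty$; since $\alpha_2 > \alpha_1 \geq 0$ and $F,G$ take values in $\Rext$ (no $-\infty$), this is enough to ensure that all quantities appearing below are well-defined elements of $\R\cup\{+\infty\}$ and no $\infty-\infty$ cancellations occur. Then minimality of $u_1$ for $H_{\alpha_1}$ and of $u_2$ for $H_{\alpha_2}$ gives
\begin{equation*}
    F(u_1) + \alpha_1 G(u_1) \leq F(u_2) + \alpha_1 G(u_2),
    \qquad
    F(u_2) + \alpha_2 G(u_2) \leq F(u_1) + \alpha_2 G(u_1).
\end{equation*}

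Summing these two inequalities and cancelling the $F$-terms (which are finite, as explained above) yields $(\alpha_2 - \alpha_1)G(u_2) \leq (\alpha_2 - \alpha_1)G(u_1)$, and since $\alpha_2 - \alpha_1 > 0$ we conclude $G(u_2) \leq G(u_1)$. Taking the supremum over $u_2 \in H_{\alpha_2}^\star$ and the infimum over $u_1 \in H_{\alpha_1}^\star$ (in that order) delivers~\eqref{eq:monotone_G}.

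The only delicate point is the arithmetic in $\Rext$: if either $F(u_i)$ or $G(u_i)$ were infinite one could not safely subtract across the two displayed inequalities. The preliminary observation that minimizers have finite $H_{\alpha_i}$-value, combined with $\alpha_2 > 0$, rules out $F(u_2) = +\infty$ and $G(u_2) = +\infty$; the case of a minimizer $u_1$ at $\alpha_1 = 0$ with $G(u_1) = +\infty$ can be handled separately, as the inequality $\sup_{H_{\alpha_2}^\star} G \leq +\infty = G(u_1)$ is then automatic. Aside from this bookkeeping, the argument is entirely elementary.
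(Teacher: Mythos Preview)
Your proof is correct and uses essentially the same exchange argument as the paper: pick arbitrary minimizers $u_1\in H_{\alpha_1}^\star$, $u_2\in H_{\alpha_2}^\star$, play each against the other, and conclude $G(u_2)\leq G(u_1)$. The only cosmetic difference is that the paper argues by contradiction (assuming $G(u_{\alpha_2})>G(u_{\alpha_1})$ and deriving $H_{\alpha_1}(u_{\alpha_2})<H_{\alpha_1}(u_{\alpha_1})$), whereas you sum the two minimality inequalities directly; your version is slightly more streamlined and is also more careful about the extended-real arithmetic, which the paper leaves implicit.
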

\begin{proof}
    Let us consider $u_{\alpha_i}\in H_{\alpha_i}^\star$ for $i=1,2$.
    Then, observing that $H_{\alpha_2}(u_{\alpha_2}) \leq H_{\alpha_2}(u_{\alpha_1})$, we deduce that
    \begin{equation*}
        F(u_{\alpha_2})+\alpha_2 G(u_{\alpha_2})
        \leq F(u_{\alpha_1})+\alpha_2 G(u_{\alpha_1}),
    \end{equation*}
    which yields
    \begin{equation} \label{eq:aux_ineq}
        F(u_{\alpha_2})- F(u_{\alpha_1})
        \leq \alpha_2 \big( G(u_{\alpha_1})
        - G(u_{\alpha_2}) \big).
    \end{equation}
    Let us assume by contradiction that $G(u_{\alpha_2})>G(u_{\alpha_1})$, resulting in $G(u_{\alpha_1}) - G(u_{\alpha_2})< 0$.
    In this case, we would get $H_{\alpha_1}(u_{\alpha_2})< H_{\alpha_1}(u_{\alpha_1})$, which contradicts the minimality of $u_{\alpha_1}$ for $H_{\alpha_1}$.
    Namely, using~\eqref{eq:aux_ineq}, it follows that
    \begin{equation*}
        \begin{split}
            H_{\alpha_1}(u_{\alpha_2}) & =
            F(u_{\alpha_2})+\alpha_1 G(u_{\alpha_2})                                                               \\
                                       & \leq F(u_{\alpha_1})+\alpha_1 G(u_{\alpha_2}) +
            \alpha_2 \big( G(u_{\alpha_1})
            - G(u_{\alpha_2}) \big)                                                                                \\
                                       & = F(u_{\alpha_1})+\alpha_1 G(u_{\alpha_1}) +
            (\alpha_2 -\alpha_1) \big( G(u_{\alpha_1})
            - G(u_{\alpha_2}) \big)                                                                                \\
                                       & = H_{\alpha_1}(u_{\alpha_1}) + (\alpha_2 -\alpha_1) \big( G(u_{\alpha_1})
            - G(u_{\alpha_2})\big)                                                                                 \\
                                       & < H_{\alpha_1}(u_{\alpha_1}),
        \end{split}
    \end{equation*}
    which is impossible. Therefore, we deduce that
    \begin{equation*}
        G(u_{\alpha_2}) \leq G(u_{\alpha_1}) \quad \forall \,u_{\alpha_1}\in H^\star_{\alpha_1}, \forall\, u_{\alpha_2}\in H^\star_{\alpha_2},
    \end{equation*}
    and hence~\eqref{eq:monotone_G} follows.
\end{proof}

We are now in position to prove Theorem~\ref{thm:principle}.

\begin{proof}[Proof of Theorem~\ref{thm:principle}]
    Let us define $R\colon [a,b]\to\Rext$ as follows:
    \begin{equation}\label{eq:def_mon_funct}
        R(\alpha) \coloneqq \inf_{u\in H^\star_{\alpha}} G(u).
    \end{equation}
    Given $\alpha_2\geq \alpha_1$, we have that $R(\alpha_2)\leq R(\alpha_1)$. Indeed, in the interesting case $\alpha_2 > \alpha_1$, we can take advantage of Lemma~\ref{lem:monotone_G} to deduce that
    \begin{equation} \label{eq:mon_R}
        R(\alpha_1) = \inf_{u\in H^\star_{\alpha_1}} G(u) \geq
        \sup_{u\in H^\star_{\alpha_2}} G(u)
        \geq
        \inf_{u\in H^\star_{\alpha_2}} G(u) = R(\alpha_2).
    \end{equation}
    Let $\bar \alpha \in [a,b]$ be such that $\sup_{u\in H^\star_{\bar \alpha}} G(u)
        >
        \inf_{u\in H^\star_{\bar \alpha}} G(u)$.
    Then, from~\eqref{eq:mon_R} it follows that $\alpha$ must be a discontinuity point for the monotone decreasing function $R$. Indeed, we have that
    \begin{equation*}
        \lim_{\alpha\to\bar\alpha^-} R(\alpha) \geq \sup_{u\in H^\star_{\bar \alpha}} G(u) >
        \inf_{u\in H^\star_{\bar \alpha}} G(u) = R(\bar\alpha).
    \end{equation*}
    Since any monotone function admits at most countably many discontinuity points, we deduce that the identity~\eqref{eq:principle} is violated for at most countably many values of $\alpha$ in $[a,b]$.
\end{proof}
\section{Derivation of \texorpdfstring{\cref{thm:principle_relaxed}}{Theorem~\ref{thm:principle_relaxed}}}\label{sec:principle_relaxed_proof}

This section contains the first main contribution of the present paper. Indeed, we provide an extension of the results presented in \cite[Section~2.6]{tikhonov1998}, which required $\arg\min_U H_\alpha \neq \emptyset$ and, in some statements, the sequential coercivity and lower semi-continuity of the functionals $F,G$.

Before proving the theorem, we first establish the cornerstone of our argument.
\begin{lemma}[Monotonicity Lemma II]\label{lem:monotone_G_relaxed}
    For $\alpha \in [0, \infty)$, we define
    \begin{equation*}
        S_\alpha \coloneqq \{(u_i)_{i \in \NN} \text{ \emph{minimizing sequence of} } H_\alpha\},
    \end{equation*}
    and we set
    \begin{equation}\label{eq:g_definition}
        \begin{split}
            G_\alpha^+ & \coloneqq \sup_{(u_i) \in S_\alpha} \limsup_{i \to \infty} G(u_i) \in [-\infty, \infty], \\
            G_\alpha^- & \coloneqq \inf_{(u_i) \in S_\alpha} \liminf_{i \to \infty} G(u_i) \in [-\infty, \infty].
        \end{split}
    \end{equation}
    Then, for all $0 \leq \alpha_1 < \alpha_2 < \infty$ such that $\inf_u H_{\alpha_1}(u) > -\infty$
    and $\inf_u H_{\alpha_2}(u) > -\infty$,
    we have that
    \begin{equation}\label{eq:monotone_G_relaxed}
        G_{\alpha_1}^+ \geq G_{\alpha_1}^- \geq G_{\alpha_2}^+ \geq G_{\alpha_2}^-.
    \end{equation}
\end{lemma}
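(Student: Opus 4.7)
The plan is to first handle the two outer inequalities $G_{\alpha_i}^+ \geq G_{\alpha_i}^-$, which are immediate. Since $\inf_u H_{\alpha_i}(u) > -\infty$ by assumption, and since $F+G\not\equiv +\infty$ forces $\inf_u H_{\alpha_i}(u) < +\infty$, the set $S_{\alpha_i}$ is nonempty. For any $(u_i) \in S_{\alpha_i}$ one has $\limsup_i G(u_i) \geq \liminf_i G(u_i)$, and taking $\sup$ over $S_{\alpha_i}$ on the left and $\inf$ over $S_{\alpha_i}$ on the right preserves this.

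The substantive claim is the middle inequality $G_{\alpha_1}^- \geq G_{\alpha_2}^+$, which upon unfolding the sup/inf reduces to
\begin{equation*}
    \liminf_{i \to \infty} G(u_i) \geq \limsup_{j \to \infty} G(v_j) \qquad \text{for every } (u_i) \in S_{\alpha_1} \text{ and } (v_j) \in S_{\alpha_2}.
\end{equation*}
The approach is a sequence-level refinement of \cref{lem:monotone_G}: setting $\El_k \coloneqq \inf_u H_{\alpha_k}(u) \in \R$, every point of $U$ majorizes the corresponding infimum, so in particular
\begin{equation*}
    F(u_i) + \alpha_2 G(u_i) \geq \El_2 \qquad \text{and} \qquad F(v_j) + \alpha_1 G(v_j) \geq \El_1.
\end{equation*}
Subtracting $H_{\alpha_1}(u_i) = F(u_i) + \alpha_1 G(u_i)$ from the first inequality and $H_{\alpha_2}(v_j) = F(v_j) + \alpha_2 G(v_j)$ from the second yields
\begin{equation*}
    (\alpha_2 - \alpha_1) G(u_i) \geq \El_2 - H_{\alpha_1}(u_i), \qquad (\alpha_2 - \alpha_1) G(v_j) \leq H_{\alpha_2}(v_j) - \El_1.
\end{equation*}
Passing to $\liminf_i$ on the left and $\limsup_j$ on the right, and using the defining properties $H_{\alpha_1}(u_i) \to \El_1$ and $H_{\alpha_2}(v_j) \to \El_2$, both sides get sandwiched around the difference quotient $(\El_2-\El_1)/(\alpha_2-\alpha_1)$, which delivers the desired inequality.

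The main technical obstacle I anticipate is the extended-real arithmetic: $G$ takes values in $\Rext$, so \emph{a priori} some $G(u_i)$ or $G(v_j)$ could equal $+\infty$, rendering the subtractions above ill-defined. The finiteness of $\El_k$ together with $H_{\alpha_k}(u_i) \to \El_k$ and $H_{\alpha_k}(v_j) \to \El_k$ forces $H_{\alpha_k}$ to be finite along the respective sequences for large indices, which makes $F$ and (when $\alpha_k > 0$) also $G$ real-valued there. The residual corner is $\alpha_1 = 0$ with $G(u_i) = +\infty$ on a tail, but then the required lower bound on $\liminf_i G(u_i)$ is trivially satisfied. These are bookkeeping matters and do not affect the core computation above.
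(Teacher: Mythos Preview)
Your argument is correct, and it differs from the paper's in a pleasant way. The paper proves the middle inequality by contradiction: assuming $G_{\alpha_1}^- < G_{\alpha_2}^+$, it fixes $\epsilon = (G_{\alpha_2}^+ - G_{\alpha_1}^-)/5$, picks near-optimal sequences for each $H_{\alpha_k}$, and cross-evaluates to reach $2\Delta_\alpha\epsilon \geq 3\Delta_\alpha\epsilon$. Your route is direct: from $H_{\alpha_2}(u_i) \geq \El_2$ and $H_{\alpha_1}(v_j) \geq \El_1$ you extract, after passing to the limit,
\[
G_{\alpha_1}^- \;\geq\; \frac{\El_2 - \El_1}{\alpha_2 - \alpha_1} \;\geq\; G_{\alpha_2}^+,
\]
which is strictly more informative than the bare inequality $G_{\alpha_1}^- \geq G_{\alpha_2}^+$. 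Indeed, this sandwich by the secant slope of the value function $\val$ is precisely what the paper later establishes (in a different way) in \cref{lem:ident_derivatives} to identify $G_\alpha^{\pm}$ with the one-sided derivatives $\partial_j^{\mp}\val(\alpha)$. So your proof is shorter, avoids the $\epsilon$-bookkeeping, and anticipates the differentiability discussion of \cref{subsec:diff_value}. The extended-real corner case $\alpha_1 = 0$, $G(u_i) = +\infty$ is handled correctly in your last paragraph.
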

\begin{proof}
    The left and the right inequality are immediate from the definitions, so we only show the middle inequality.
    We assume, by contradiction, that for some $\alpha_1 < \alpha_2$ we have that $G_{\alpha_1}^- < G_{\alpha_2}^+$,
    and we define
    \begin{equation*}
        \epsilon \coloneqq \frac{G_{\alpha_2}^+ - G_{\alpha_1}^-}{5} > 0.
    \end{equation*}
    By the definition of $G_{\alpha_1}^-$ and $G_{\alpha_2}^+$,
    we can choose sequences $(u_j^1)_{j\in\NN} \in S_{\alpha_1}$ and $(u_k^2)_{k\in\NN} \in S_{\alpha_2}$ such that
    \begin{equation}\label{eq:limsup_diff}
        \limsup_{k \to \infty} G(u_k^2) - \liminf_{j \to \infty} G(u_j^1) \geq 4\epsilon.
    \end{equation}
    Since $F+G\not\equiv +\infty$, we have that $\inf_{u \in U} H_\alpha(u) < +\infty$ for all
    $\alpha \in [0, +\infty)$, and by assumption we also have that $\inf_u H_{\alpha_1}(u) > -\infty$ and $\inf_u H_{\alpha_2}(u) > -\infty$.
    Thus, the minimizing sequences $(H_{\alpha_1}(u_j^1))_{j\in\NN}$ and $(H_{\alpha_2}(u_k^2))_{k\in\NN}$ of $H_{\alpha_1}$ and $H_{\alpha_2}$, respectively, have finite limits.
    Setting $\Delta_\alpha \coloneqq \alpha_2 - \alpha_1 > 0$, for $j,k$ large enough we hence have that $H_{\alpha_1}(u_j^1)  \leq \inf_U H_{\alpha_1} + \Delta_\alpha\epsilon$ and $H_{\alpha_2}(u_k^2)  \leq \inf_U H_{\alpha_2} + \Delta_\alpha\epsilon$, which in particular implies
    \begin{alignat}{3}
        -H_{\alpha_1}(u_j^1) & \geq - &  & H_{\alpha_1}(u_k^2) - \Delta_\alpha \epsilon,\qquad\text{and}\label{eq:limdiff_1} \\
        H_{\alpha_2}(u_j^1)  & \geq   &  & H_{\alpha_2}(u_k^2) - \Delta_\alpha \epsilon.\label{eq:limdiff_2}
    \end{alignat}
    Furthermore, by \cref{eq:limsup_diff}, we can find arbitrarily large $j, k$ such that
    \begin{equation}\label{eq:G_diff}
        G(u_k^2) - G(u_j^1) \geq 3\epsilon.
    \end{equation}
    In particular, we can find $j, k$ such that \cref{eq:limdiff_1,eq:limdiff_2,eq:G_diff} hold simultaneously.
    Fixing such $j, k$, we add \cref{eq:limdiff_1,eq:limdiff_2} to get
    \begin{align*}
        \Delta_\alpha G(u_j^1) & = (H_{\alpha_2}(u_j^1) - H_{\alpha_1}(u_j^1))                                                                                  \\
                               & \geq  (H_{\alpha_2}(u_k^2) - H_{\alpha_1}(u_k^2)) - 2\Delta_\alpha\epsilon  = \Delta_\alpha G(u_k^2) - 2\Delta_\alpha\epsilon.
    \end{align*}
    Rearranging and using \cref{eq:G_diff}, we have
    \begin{equation*}
        2\Delta_\alpha\epsilon \geq \Delta_\alpha (G(u_k^2) - G(u_j^1)) \geq 3\Delta_\alpha\epsilon,
    \end{equation*}
    which is a contradiction and finishes the proof.
\end{proof}

We now prove an auxiliary result.

\begin{lemma}\label{lem:good_alpha_are_interval}
    Those $\alpha \in [0, \infty)$ for which $\inf_u H_{\alpha}(u) > -\infty$ form an interval in $[0, +\infty)$.
\end{lemma}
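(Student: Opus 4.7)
My plan is to identify $\inf_{u\in U} H_\alpha(u)$ as the value at $\alpha$ of a concave function of the regularization parameter, and then to invoke the elementary fact that the set on which a concave function (with values in $\R\cup\{-\infty\}$) is strictly greater than $-\infty$ is convex. Since a convex subset of $[0,+\infty)$ is an interval, the claim follows immediately.

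First, I would observe that for every fixed $u\in U$ the map
\[
\alpha \mapsto H_\alpha(u) = F(u)+\alpha G(u)
\]
is affine in $\alpha$ (with values in $\R\cup\{+\infty\}$). Consequently, the value function
\[
\val(\alpha) \coloneqq \inf_{u\in U} H_\alpha(u)
\]
is a pointwise infimum of a family of affine functions on $[0,+\infty)$, hence concave. Moreover, the standing hypothesis $F+G\not\equiv +\infty$ guarantees that $\val(\alpha)<+\infty$ for every $\alpha\in[0,+\infty)$, so that $\val$ takes values in $\R\cup\{-\infty\}$. Next, if $\alpha_1<\alpha_2$ belong to $[0,+\infty)$ and satisfy $\val(\alpha_i)>-\infty$ for $i=1,2$, then for any $\alpha\in[\alpha_1,\alpha_2]$ we write $\alpha=\lambda\alpha_1+(1-\lambda)\alpha_2$ with $\lambda\in[0,1]$ and apply the concavity inequality to obtain
\[
\val(\alpha) \geq \lambda\, \val(\alpha_1) + (1-\lambda)\, \val(\alpha_2) > -\infty,
\]
where the right-hand side is unambiguously a finite real number. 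This shows that $\{\alpha\in[0,+\infty): \val(\alpha)>-\infty\}$ is convex, and hence an interval.

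There is no genuine obstacle in this argument. The only point requiring a line of care is to rule out the ambiguous sum ``$\lambda\cdot(-\infty)+(1-\lambda)\cdot r$'' in the concavity inequality, but since we are precisely comparing two values $\val(\alpha_1), \val(\alpha_2)$ already assumed to be finite, this issue does not arise. A short remark could also record that the same reasoning applies on any sub-interval of $[0,+\infty)$ and that the interval produced may be open, half-open, or closed at either endpoint, but this level of detail is not needed for the statement of \cref{lem:good_alpha_are_interval}.
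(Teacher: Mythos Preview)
Your proof is correct. The paper in fact mentions exactly your approach in the opening line of its own proof---noting that the claim follows from the concavity of the value function $\val(\alpha)=\inf_U H_\alpha$---but then elects to give a more hands-on direct argument: it shows that if $\inf_U H_\alpha=-\infty$, one can pick $(u_i)$ with $H_\alpha(u_i)\to-\infty$, extract a subsequence along which $G$ has a fixed sign, and conclude that $\inf_U H_{\alpha'}=-\infty$ for all $\alpha'$ on one side of $\alpha$. Your concavity route is cleaner and more conceptual, and it extends immediately to the multi-parameter setting of \cref{sec:multi} (indeed, the paper proves concavity there as \cref{lemma:inf_measurable}); the paper's direct proof, on the other hand, is entirely self-contained at this point of the exposition and avoids any appeal to the structure of the effective domain of a concave function.
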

\begin{proof}
    We observe that this fact descends from the concavity of the value function $\val(\alpha)\coloneqq\inf_u  H_{\alpha}(u)$ (see \cref{lemma:inf_measurable}). However, we give here a direct proof.
    To prove the claim, it suffices to show that for $\alpha \in [0, +\infty)$,
    $\inf_u H_{\alpha}(u) = -\infty$ implies one of those two cases:
    \begin{enumerate}
        \renewcommand{\theenumi}{(\arabic{enumi})}
        \renewcommand{\labelenumi}{\theenumi}
        \item For all $\alpha' \geq \alpha$, $\inf_u H_{\alpha'}(u) = -\infty$.\label{case:bad_alphas_bigger}
        \item For all $\alpha' \leq \alpha$, $\inf_u H_{\alpha'}(u) = -\infty$.\label{case:bad_alphas_smaller}
    \end{enumerate}
    Let us assume that $\inf_u H_{\alpha}(u) = -\infty$ for some $\alpha \in [0, \infty)$,
    and let us take a sequence $(u_i)_{i \in \NN}$ such that $H_{\alpha}(u_i) \rightarrow -\infty$.
    In addition, there must exist one of following two subsequences:
    \begin{enumerate}
        \renewcommand{\theenumi}{(\alph{enumi})}
        \renewcommand{\labelenumi}{\theenumi}
        \item A subsequence $(u_{i_j})_{j \in \NN}$ such that $G(u_{i_j}) \leq 0$ for all $j \in \NN$.\label{case:g_small}
        \item A subsequence $(u_{i_j})_{j \in \NN}$ such that $G(u_{i_j}) \geq 0$ for all $j \in \NN$.\label{case:g_large}
    \end{enumerate}
    For $\alpha' \geq \alpha$,~\ref{case:g_small} implies that
    $H_{\alpha'}(u_{i_j}) \leq H_{\alpha}(u_{i_j}) \rightarrow -\infty$, and thus~\ref{case:g_small} implies~\ref{case:bad_alphas_bigger}.
    Likewise,~\ref{case:g_large} implies~\ref{case:bad_alphas_smaller}, and the claim follows.
\end{proof}

Finally, we proceed with the proof of \cref{thm:principle_relaxed}.

\begin{proof}[Proof of \cref{thm:principle_relaxed}]
    We must show that $G_\alpha$ exists for all $\alpha$ for which $\inf_u H_{\alpha}(u) > -\infty$, with at most
    countably many exceptions. Using \cref{lem:good_alpha_are_interval} and possibly neglecting the two endpoints
    ---as we allow for countable exceptions---, it suffices to prove the existence of such $G_\alpha$
    for all but countably many $\alpha$ in some open interval $(\tilde\alpha_1, \tilde\alpha_2)$, where $\tilde\alpha_1 \in [0, \infty)$, $\tilde\alpha_2 \in (\tilde\alpha_1, \infty]$ and
    $\inf_u H_{\alpha}(u) > -\infty$ for all $\alpha$ in $(\tilde\alpha_1, \tilde\alpha_2)$. \\
    By \cref{eq:monotone_G_relaxed}, the quantity $G_\alpha^+$ defined in \cref{eq:g_definition} is monotone decreasing on $(\tilde\alpha_1, \tilde\alpha_2)$ and for
    each $\alpha \in (\tilde\alpha_1, \tilde\alpha_2)$, $G_\alpha^- \neq G_\alpha^+$ implies that $\alpha$ is a discontinuity point of $G_\alpha^+$.
    Indeed, assume that $G_\alpha^- < G_\alpha^+$ for some $\alpha \in (\tilde\alpha_1, \tilde\alpha_2)$, and let
    $(\alpha_i)_{i \in \NN}$ be a sequence in $(\tilde\alpha_1, \tilde\alpha_2)$ converging to $\alpha$ from above.
    By \cref{eq:monotone_G_relaxed}, we have that
    \begin{equation*}
        \limsup_{i \to \infty} G_{\alpha_i}^+ \leq G_\alpha^- < G_\alpha^+,
    \end{equation*}
    and thus $\alpha$ is a discontinuity point of $G_\alpha^+$.
    Since a monotone function admits at most countably many discontinuity points, we deduce that $G_\alpha^- \neq G_\alpha^+$ for at most countably many values of $\alpha \in (\tilde\alpha_1, \tilde\alpha_2)$.
    Setting $G_\alpha \coloneqq G_\alpha^- = G_\alpha^+$ for all other $\alpha$ finishes the proof.
\end{proof}



We lastly prove that \cref{thm:principle_relaxed} implies a refined version of \cref{thm:principle},
which also shows that \cref{thm:principle_relaxed} is indeed a generalization of \cref{thm:principle}.
\begin{corollary}\label{thm:principle_refined}
    Let $H_\alpha:U\to\Rext$ and $ H_\alpha^\star\subset U$ be defined as in \cref{eq:def_H,eq:def_argmin}, respectively.
    Then, for all but countable $\alpha$ in $[0,+\infty)$,
    we have: if $H_\alpha^\star \neq \emptyset$, then
    \begin{equation*}
        \inf_{u\in H_\alpha^\star} G(u) =
        \sup_{u\in H_\alpha^\star} G(u).
    \end{equation*}
\end{corollary}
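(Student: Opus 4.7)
The plan is to derive this corollary as an essentially immediate consequence of \cref{thm:principle_relaxed}, by exploiting the fact that constant sequences supported on minimizers are in particular minimizing sequences. Let $E \subset [0,+\infty)$ be the at-most-countable exceptional set coming from \cref{thm:principle_relaxed}: for every $\alpha \in [0,+\infty) \setminus E$ satisfying $\inf_U H_\alpha > -\infty$, a common limit $G_\alpha \in [-\infty,+\infty]$ exists along all minimizing sequences. I will show that this same set $E$ works for the statement of the corollary.

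First I would fix $\alpha \in [0,+\infty) \setminus E$ with $H_\alpha^\star \neq \emptyset$ and verify that $\inf_U H_\alpha \in \R$, so that \cref{thm:principle_relaxed} is applicable. Since by construction $F,G\colon U \to \Rext = \R \cup \{+\infty\}$ never attain the value $-\infty$, and since a minimizer $u^\star \in H_\alpha^\star$ realises the infimum, we get $\inf_U H_\alpha = H_\alpha(u^\star) > -\infty$. On the other hand, the standing assumption $F+G \not\equiv +\infty$ guarantees the existence of some $u_0 \in U$ with $F(u_0), G(u_0) \in \R$, and therefore $\inf_U H_\alpha \leq H_\alpha(u_0) = F(u_0) + \alpha G(u_0) < +\infty$. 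Combining both bounds yields $\inf_U H_\alpha \in \R$.

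Next, given any $u^\star \in H_\alpha^\star$, the constant sequence $u_i \equiv u^\star$ is trivially a minimizing sequence for $H_\alpha$, because $H_\alpha(u_i) = H_\alpha(u^\star) = \inf_U H_\alpha$ for every $i$. Applying \cref{thm:principle_relaxed} at this $\alpha$, the existing limit $G_\alpha$ satisfies $G(u^\star) = \lim_i G(u_i) = G_\alpha$. Since $u^\star$ was arbitrary in $H_\alpha^\star$, this forces $G$ to be constant equal to $G_\alpha$ on $H_\alpha^\star$, in particular $\inf_{u \in H_\alpha^\star} G(u) = \sup_{u \in H_\alpha^\star} G(u) = G_\alpha$. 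This concludes the argument, the exceptional set being the countable set $E$ inherited from \cref{thm:principle_relaxed}.

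There is essentially no serious obstacle here: the entire content of the corollary has been packed into the stronger Theorem \ref{thm:principle_relaxed}, and the only mildly delicate point is to check that existence of a minimizer automatically places $\alpha$ in the regime $\inf_U H_\alpha > -\infty$ where that theorem has teeth. This relies precisely on the codomain $\Rext$ of $F$ and $G$ excluding $-\infty$, together with the assumption $F+G\not\equiv +\infty$; the two together pin the infimum into $\R$ as soon as it is attained.
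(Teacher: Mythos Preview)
Your proof is correct and follows essentially the same approach as the paper: apply \cref{thm:principle_relaxed}, note that $H_\alpha^\star\neq\emptyset$ forces $\inf_U H_\alpha>-\infty$, and evaluate the common limit $G_\alpha$ along the constant minimizing sequence $u_i\equiv u^\star$ for each $u^\star\in H_\alpha^\star$. The additional verification that $\inf_U H_\alpha<+\infty$ is harmless but not actually needed, since the hypothesis of \cref{thm:principle_relaxed} only requires the infimum to exceed $-\infty$.
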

\begin{proof}
    If $H_\alpha^\star \neq \emptyset$, then for any $u^\star \in H_\alpha^\star$, we have $\inf_u H_\alpha(u) = H_\alpha(u^\star)>-\infty$,
    and we can apply \cref{thm:principle_relaxed} to find $G_\alpha$ for all but countable many values of $\alpha$.
    Using the constant minimizing sequence $(u_i')_{i \in \NN}$ with $u'_i=u^\star$, we deduce that
    $G(u^\star) = G_\alpha$ for every $u^\star \in H_\alpha^\star$. Hence, for all $\alpha$ where such a $G_\alpha$ exists, we have
    \begin{equation*}
        \inf_{u^\star\in H_\alpha^\star} G(u^\star) =
        G_\alpha =
        \sup_{u^\star\in H_\alpha^\star} G(u^\star). \qedhere
    \end{equation*}
\end{proof}

\section{Trade-off Invariance Principle for multi-regularization} \label{sec:multi}
In this section, we consider multi-regularized minimization problems.
To this end, we consider
a set $U \neq \emptyset$ and mappings $F, G_1, \ldots ,G_m\colon U \to \R \cup \{+\infty\}$ with $m\geq 1$.
For $\alpha =(\alpha_1, \ldots ,\alpha_m) \in [0, +\infty)^m$, we define $H_{\alpha}\colon U \to \R$ by
\begin{equation}\label{eq:multi_regularization}
    H_{\alpha}(u) \coloneqq F(u) + \sum_{j=1}^m \alpha_j G_j(u)
\end{equation}
for every $u \in U$.
As before, we assume the existence of some $\bar{u} \in U$
and some $\bar \alpha \in (0, +\infty)^m$ such that
\begin{equation}
    H_{\bar{\alpha}}(\bar{u}) < +\infty,
\end{equation}
so that $F + \sum_{j=1}^m G_j \not \equiv +\infty$, and $\inf_U H_\alpha < +\infty$ for every $\alpha \in [0, +\infty)^m$.
In this framework, we strive to formulate a version of the Trade-off Invariance Principle.
In other words, we try to establish a statement of the following form:
\begin{conjecture}\label{conj:multi_regularization}
    For all but some exceptional $\alpha  \in [0,+\infty)^m$,
    if $\inf_{u\in U} H_{\alpha}(u) > -\infty$, then there exist
    $G_\alpha^1,\ldots, G_\alpha^m \in [-\infty, +\infty]$ such that $G_j(u_i) \rightarrow G_\alpha^j$
    for every $j=1,\ldots,m$ and
    for every minimizing sequence $(u_i)_{i \in \NN}$
    of $H_\alpha$.
\end{conjecture}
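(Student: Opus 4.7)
The plan is to derive the multi-parameter principle from the concavity of the value function $\val\colon[0,+\infty)^m\to\R\cup\{-\infty\}$ defined by $\val(\alpha)\coloneqq\inf_{u\in U}H_\alpha(u)$, which is automatic as $\val$ is the pointwise infimum of the affine family $\{\alpha\mapsto F(u)+\sum_{j=1}^m\alpha_jG_j(u)\}_{u\in U}$. Let $\Omega$ denote the interior, inside $(0,+\infty)^m$, of the effective domain $\{\alpha:\val(\alpha)>-\infty\}$; this is an open convex set on which $\val$ is real-valued, and by the classical fact that a finite concave function on an open convex subset of $\mathbb{R}^m$ is Fr\'echet-differentiable Lebesgue-almost everywhere, the non-differentiability set of $\val$ inside $\Omega$ is Lebesgue-null. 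The candidate exceptional set in \cref{conj:multi_regularization} will be this null set together with $[0,+\infty)^m\setminus\Omega$, which is itself contained in the union of the coordinate hyperplanes and the boundary of a convex set, and so is also Lebesgue-null.

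The core estimate is the multi-parameter analogue of the inequality underlying \cref{lem:monotone_G_relaxed}. Fix $\alpha\in\Omega$, an index $k\in\{1,\ldots,m\}$, and $t\in\R$ with $\alpha+te_k\in\Omega$, and let $(u_i)_{i\in\NN}$ be any minimizing sequence of $H_\alpha$. Since $\val(\alpha)\in\R$ and $\alpha_k>0$, the values $H_\alpha(u_i)$ and $G_k(u_i)$ are eventually finite, so $H_{\alpha+te_k}(u_i)=H_\alpha(u_i)+tG_k(u_i)$ and therefore $\val(\alpha+te_k)\le H_\alpha(u_i)+tG_k(u_i)$. Dividing by $t$ and letting $i\to\infty$, using $H_\alpha(u_i)\to\val(\alpha)$, yields
\begin{equation*}
    \liminf_{i\to\infty}G_k(u_i)\ge\frac{\val(\alpha+te_k)-\val(\alpha)}{t}\ \text{for }t>0,\quad \limsup_{i\to\infty}G_k(u_i)\le\frac{\val(\alpha+te_k)-\val(\alpha)}{t}\ \text{for }t<0.
\end{equation*}
Since $t\mapsto\val(\alpha+te_k)$ is a scalar concave function, the right-hand sides are monotone in $t$ and admit one-sided limits as $t\to 0^\pm$ equal to the one-sided partial derivatives $\partial_k^\pm\val(\alpha)$; passing to these limits tightens the inequalities to $\partial_k^+\val(\alpha)\le\liminf_{i\to\infty}G_k(u_i)\le\limsup_{i\to\infty}G_k(u_i)\le\partial_k^-\val(\alpha)$.

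At every $\alpha\in\Omega$ at which $\val$ is Fr\'echet-differentiable, the one-sided partial derivatives coincide, so the preceding squeeze collapses to $\lim_{i\to\infty}G_k(u_i)=\partial_k\val(\alpha)$ for each $k$, independently of the minimizing sequence; this is precisely the assertion of \cref{conj:multi_regularization} at such $\alpha$, with $G_\alpha^k\coloneqq\partial_k\val(\alpha)$, and the exceptional set is Lebesgue-null by the first paragraph. The principal obstacle I foresee is sharpening the conclusion: ``Lebesgue-null in $\mathbb{R}^m$'' is qualitatively weaker than the ``at most countable'' exceptional sets in \cref{thm:principle,thm:principle_relaxed}, and a natural refinement would be to assert that the exceptional set meets each coordinate line in at most countably many points. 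This slicewise property follows by applying \cref{thm:principle_relaxed} to the scalar concave functions $\alpha_k\mapsto\val(\alpha_{-k},\alpha_k)$ for each fixed $\alpha_{-k}$, but combining these slicewise facts into a single clean global description---while controlling the required measurability and correctly handling the endpoints supplied by \cref{lem:good_alpha_are_interval} applied coordinatewise---is the substantive task beyond the Lebesgue-null statement that the concavity argument delivers immediately.
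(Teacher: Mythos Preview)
Your argument is correct and, in fact, cleaner than the paper's primary route to \cref{thm:multi_reg}. The paper first proves that the exceptional sets $B_j=\{\alpha: G_\alpha^{j,+}>G_\alpha^{j,-}\}$ are Lebesgue-measurable by establishing semi\-continuity of $\alpha\mapsto G_\alpha^{j,\pm}$ on the interior of the effective domain (\cref{prop:semicont_G}, \cref{cor:B_measurable}), and only then applies Fubini--Tonelli together with the single-regularizer \cref{thm:principle_relaxed} on each coordinate slice to conclude $|B_j|=0$. You bypass the measurability question entirely: your squeeze $\partial_k^+\val(\alpha)\le\liminf G_k(u_i)\le\limsup G_k(u_i)\le\partial_k^-\val(\alpha)$ identifies the exceptional set directly with the non-differentiability locus of the concave value function, whose null measure is classical (Rademacher, or Alexandrov).

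Interestingly, the paper does eventually prove your key identity $\partial_j^\pm\val(\alpha)=G_\alpha^{j,\mp}$ (\cref{lem:ident_derivatives}) and the equivalence ``$\val$ differentiable at $\alpha$ $\Longleftrightarrow$ trade-off invariance holds at $\alpha$'' (\cref{prop:different_value}), but it places this in \cref{sec:consequences} as a consequence rather than using it as the engine of the proof. What the paper's longer route buys is the structural information in \cref{prop:semicont_G}---upper/lower semicontinuity of $G_\alpha^{j,\pm}$ and the one-sided continuity \eqref{eq:side_continuity}---which is of independent interest. What your route buys is brevity and the immediate, explicit identification $G_\alpha^k=\partial_k\val(\alpha)$ of the common limit with the gradient of the value function. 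Your closing remarks on the slicewise-countable refinement are apt; the paper's Fubini argument delivers exactly that slicewise statement automatically, whereas your approach would require the extra observation that partial non-differentiability of a concave function along a fixed line is a countable set.
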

The main obstacle is to describe the size of the set of exceptional values $\alpha$.
We note right away that, unlike in the single-regularization case, we cannot hope for the set
of exceptional pairs to be countable: Indeed, considering $m=2$ and adding the trivial regularizer $G_2(u) = 0$ to
the example in \cref{rem:counterexample}, all pairs of the form
$(\alpha^k, a)$ for $a \in [0, +\infty)$ are exceptional---and there are uncountably many of them. \\
On the other hand, constraining the set of exceptional pairs to be Lebesgue negligible seems
to be in reach: Let us denote by $B_1$ the set of those values $\alpha = (\alpha_1, \ldots ,\alpha_m)$ for which
$\inf_{u\in U} H_{\alpha}(u) > -\infty$, but for which we cannot find a $G_\alpha^1$ as above.
Then let us define $B_j$ analogously for $G_j$ with $j=2,\ldots,m$, i.e.,
\begin{equation} \label{eq:def_sets_Bj}
    B_j  \coloneqq \{\alpha  \in [0, +\infty)^m \mid G_{\alpha}^{j, +} > G_{\alpha}^{j, -} \,\, \mbox{  and } \,\, \mathrm{inf}_{u\in U} H_{\alpha}(u) > -\infty\}.
\end{equation}
If we assume for a moment that $B_1, \ldots,B_m$ are measurable,
then we can use Fubini-Tonelli's Theorem to estimate the size of the set of exceptional pairs:
\begin{align*}
    |B_1|  = \int_0^{+\infty} \cdots \int_0^{+\infty}\Bigg(\underbrace{\int_0^{+\infty} \mathbbb{1}_{B_1}(\alpha_1, \alpha_2) \, \mathrm{d}\alpha_1}_{=\, 0 \text{ by \cref{thm:principle_relaxed}}} \Bigg) \, \mathrm{d}\alpha_2 \cdots \mathrm{d}\alpha_m
    = 0.
\end{align*}
The same argument applies to $B_j$ with $j=2,\ldots,m$, and thus $|\bigcup_{j=1}^m B_j| = 0$.
However, it is a priori not obvious why the sets $B_j$ should be measurable. Indeed, it is worth recalling that we are not making any measurability assumption on the functions $F,G_1,\ldots,G_m$.\\
The aim of the rest of this section is to show that this is actually the case (see \cref{cor:B_measurable}). Therefore, using Fubini-Tonelli as mentioned above, we prove the following version of \cref{conj:multi_regularization}:
\begin{theorem} \label{thm:multi_reg}
    For Lebesgue-almost all $\alpha \in [0,+\infty)^m$,
    if $\inf_{u\in U} H_{\alpha}(u) > -\infty$, then there exist
    $G_\alpha^1,\ldots,  G_\alpha^m \in [-\infty, +\infty]$ such that $G_j(u_i) \rightarrow G_\alpha^j$  for every $j=1,\ldots,m$ and for every minimizing sequence $(u_i)_{i \in \NN}$
    of $H_\alpha$.
\end{theorem}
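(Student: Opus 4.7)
I would follow the strategy sketched right before the theorem: for each $j\in\{1,\ldots,m\}$, let $G_\alpha^{j,\pm}$ be the natural multi-regularization analogues of \cref{eq:g_definition}, let $B_j$ be the exceptional set defined in \cref{eq:def_sets_Bj}, prove that each $B_j$ is Borel measurable as a subset of $[0,+\infty)^m$, and then invoke the Fubini--Tonelli computation already written in the excerpt---combined with \cref{thm:principle_relaxed}---to conclude that $|B_j|=0$ for every $j$. Since no measurability or topological assumption is made on $F$, the $G_j$, or $U$, the non-trivial part---what I expect to be the content of the corollary labelled \cref{cor:B_measurable}, and the main obstacle---is precisely the Borel measurability of $B_j$.

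To handle it, I would introduce the two constrained sub-value functions
\begin{equation*}
    \psi_j(\alpha, t) \coloneqq \inf\{H_\alpha(u) : u \in U,\ G_j(u) \geq t\},\qquad
    \bar\psi_j(\alpha, t) \coloneqq \inf\{H_\alpha(u) : u \in U,\ G_j(u) \leq t\},
\end{equation*}
and observe that, for each fixed $t\in\R$, both are pointwise infima over $u$ of the affine maps $\alpha\mapsto F(u)+\sum_i \alpha_i G_i(u)$. They are therefore concave and upper semi-continuous on $[0,+\infty)^m$, and in particular Borel measurable---as is $\val(\alpha)\coloneqq\inf_U H_\alpha$, by the same reasoning already invoked in \cref{lemma:inf_measurable}. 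A diagonal extraction argument closely modelled on the proof of \cref{thm:principle_relaxed} then yields the characterizations
\begin{equation*}
    G_\alpha^{j,+} = \sup\{t\in\R : \psi_j(\alpha,t) \leq \val(\alpha)\},\qquad
    G_\alpha^{j,-} = \inf\{t\in\R : \bar\psi_j(\alpha,t) \leq \val(\alpha)\}.
\end{equation*}
The driving observation is that, since $\psi_j\geq\val$ trivially, the inequality $\psi_j(\alpha,t)\leq\val(\alpha)$ holds exactly when some minimizing sequence of $H_\alpha$ satisfies $G_j\geq t$ along it, which is in turn equivalent to $G_\alpha^{j,+}\geq t$; the statement for $\bar\psi_j$ and $G_\alpha^{j,-}$ is symmetric.

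Using monotonicity of $\psi_j,\bar\psi_j$ in $t$ together with density of $\mathbb{Q}$, these characterizations transform into
\begin{equation*}
    \{\alpha : G_\alpha^{j,+} > c\} = \bigcup_{q\in\mathbb{Q},\, q>c} \{\alpha : \psi_j(\alpha,q) \leq \val(\alpha)\},\qquad
    \{\alpha : G_\alpha^{j,-} < c\} = \bigcup_{q\in\mathbb{Q},\, q<c} \{\alpha : \bar\psi_j(\alpha,q) \leq \val(\alpha)\},
\end{equation*}
for every $c \in \R$, and each set on the right is Borel since it is the preimage of the closed subset $\{(x,y)\in[-\infty,+\infty]^2 : x\leq y\}$ under the pair of Borel maps $\alpha\mapsto(\psi_j(\alpha,q),\val(\alpha))$ or $\alpha\mapsto(\bar\psi_j(\alpha,q),\val(\alpha))$. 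Hence $G_\alpha^{j,\pm}$ are Borel in $\alpha$, and so is $B_j = \{\val>-\infty\}\cap\{G_\alpha^{j,+}>G_\alpha^{j,-}\}$. The Fubini--Tonelli estimate written in the excerpt now applies: for fixed $\alpha_{-j}\coloneqq(\alpha_i)_{i\neq j}$ the family $\alpha_j\mapsto H_\alpha$ has the single-regularization form $F'_{\alpha_{-j}}+\alpha_j G_j$ with $F'_{\alpha_{-j}}\coloneqq F+\sum_{i\neq j}\alpha_i G_i$, so \cref{thm:principle_relaxed} makes the $\alpha_j$-slice of $B_j$ at most countable, hence Lebesgue negligible, and Tonelli yields $|B_j|=0$. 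Subadditivity gives $\big|\bigcup_{j=1}^m B_j\big|=0$, and for any $\alpha$ outside this null set with $\val(\alpha)>-\infty$, the equality $G_\alpha^{j,+}=G_\alpha^{j,-}\eqqcolon G_\alpha^j$ combined with the trivial sandwich $G_\alpha^{j,-}\leq\liminf_i G_j(u_i)\leq\limsup_i G_j(u_i)\leq G_\alpha^{j,+}$ along any minimizing sequence $(u_i)$ of $H_\alpha$ completes the proof.
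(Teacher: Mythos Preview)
Your argument is correct and follows a genuinely different route to the measurability of $B_j$ than the paper does. The paper proceeds via \cref{lem:finite_lims_G} and \cref{prop:semicont_G}: it first shows that $G_\alpha^{j,\pm}$ are locally bounded on the interior of $\{\val>-\infty\}$, then establishes that $\alpha\mapsto G_\alpha^{j,+}$ is upper semi-continuous and $\alpha\mapsto G_\alpha^{j,-}$ is lower semi-continuous there, and finally concludes in \cref{cor:B_measurable} by handling the convex boundary of $\{\val>-\infty\}$ as a null set. Your approach sidesteps this entirely by encoding $G_\alpha^{j,\pm}$ through the constrained value functions $\psi_j,\bar\psi_j$, whose concavity in $\alpha$ (for each fixed level $t$) gives Borel measurability directly, and then reading off the level sets of $G_\alpha^{j,\pm}$ through countable unions indexed by rationals. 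This is more elementary and self-contained for the purpose of \cref{thm:multi_reg}. On the other hand, the paper's semi-continuity statement in \cref{prop:semicont_G} is strictly stronger than mere measurability, and it is reused later (see \cref{prop:different_value}) to characterize the differentiability points of $\val$; your argument does not yield that byproduct. Two minor remarks: your claimed equivalence ``$\psi_j(\alpha,t)\leq\val(\alpha)\Leftrightarrow G_\alpha^{j,+}\geq t$'' fails at the endpoint (only $G_\alpha^{j,+}>t\Rightarrow\psi_j(\alpha,t)\leq\val(\alpha)\Rightarrow G_\alpha^{j,+}\geq t$ holds), but your supremum formula and the level-set identity are unaffected; and the upper semi-continuity of $\psi_j(\cdot,t)$ is clean only on $(0,+\infty)^m$ (where each $\alpha\mapsto H_\alpha(u)$ is either affine or identically $+\infty$), so one should restrict there and discard the boundary faces of $[0,+\infty)^m$ as a null set.
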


In analogy with the case of a single regularizer, we can deduce the following corollary.

\begin{corollary}
    Let $H_\alpha:U\to\Rext$  be defined as in \cref{eq:multi_regularization}, and let us introduce $H_\alpha^\star \coloneqq \arg\min_{u\in U} H_\alpha(u) $ .
    Then, for Lebesgue-almost all $\alpha \in [0,+\infty)^m$,
    we have: if $H_\alpha^\star \neq \emptyset$, then
    \begin{equation*}
        \inf_{u\in H_\alpha^\star} G_j(u) =
        \sup_{u\in H_\alpha^\star} G_j(u)
        \qquad
        \mbox{for every}
        \qquad
        j=1,\ldots, m.
    \end{equation*}
\end{corollary}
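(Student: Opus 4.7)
The plan is to deduce this corollary from \cref{thm:multi_reg} by the same constant–minimizing–sequence trick used in \cref{thm:principle_refined}. Concretely, I will transfer the ``a.e.\ good'' set supplied by \cref{thm:multi_reg} into the statement about $H_\alpha^\star$, using only that a nonempty $H_\alpha^\star$ automatically witnesses $\inf_U H_\alpha > -\infty$ and supplies a trivial minimizing sequence.

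First I would let $N \subset [0,+\infty)^m$ be the Lebesgue-negligible exceptional set from \cref{thm:multi_reg}; outside $N$, whenever $\inf_U H_\alpha > -\infty$, the limits $G_\alpha^1,\dots, G_\alpha^m$ exist and are attained by every minimizing sequence. I would also discard the coordinate hyperplanes $\{\alpha_j = 0\}$, which form a further Lebesgue-null set: on these, $G_j$ drops out of $H_\alpha$ and need not be constant on $H_\alpha^\star$, so they are genuine exceptions. Fix now $\alpha \in [0,+\infty)^m \setminus \big(N \cup \bigcup_j \{\alpha_j=0\}\big)$ with $H_\alpha^\star \neq \emptyset$, and pick any $u^\star \in H_\alpha^\star$. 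Then $\inf_U H_\alpha = H_\alpha(u^\star)$ is finite, so in particular $>-\infty$, and the conclusion of \cref{thm:multi_reg} applies at $\alpha$. Applying it to the constant sequence $u_i' \equiv u^\star$, which is trivially minimizing, yields $G_j(u^\star) = G_\alpha^j$ for each $j=1,\dots,m$. Since $u^\star \in H_\alpha^\star$ was arbitrary, each $G_j$ is constant on $H_\alpha^\star$ with value $G_\alpha^j$, so $\inf_{H_\alpha^\star} G_j = \sup_{H_\alpha^\star} G_j$ as claimed.

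I do not expect a real obstacle: the genuine work (measurability of the exceptional sets $B_j$ and the Fubini-type estimate) has already been absorbed into \cref{thm:multi_reg}, and the constant-sequence step is purely formal. The one point to be careful about is that $H_\alpha(u^\star) < +\infty$ forces $G_j(u^\star) < +\infty$ for every $j$ with $\alpha_j > 0$, which is why I first excluded the hyperplanes $\{\alpha_j = 0\}$; after that exclusion the constant sequence is legitimately a minimizing sequence and the rest is immediate.
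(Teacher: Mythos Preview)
Your proof is correct and follows exactly the paper's approach: invoke \cref{thm:multi_reg} and, on the full-measure good set, evaluate along the constant minimizing sequence $u_i\equiv u^\star$ for each $u^\star\in H_\alpha^\star$. The separate exclusion of the coordinate hyperplanes is harmless but unnecessary: the constant sequence is a minimizing sequence simply because $H_\alpha(u^\star)=\inf_U H_\alpha$, irrespective of whether $G_j(u^\star)$ is finite, and \cref{thm:multi_reg} already permits $G_\alpha^j\in[-\infty,+\infty]$.
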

\begin{proof}
    The argument follows the line of the proof of \cref{thm:principle_refined}, i.e., when $H_\alpha^\star \neq \emptyset$, we consider $u^\star \in H_\alpha^\star$ and we apply \cref{thm:multi_reg} to the  minimizing sequence that is constantly equal to $u^\star$.
\end{proof}

Before we prove the measurability of $B_j$, we need to show an ancillary lemma:
\begin{lemma}\label{lemma:inf_measurable}
    Let $H_\alpha:U\to\Rext$   be defined as in \cref{eq:multi_regularization}.
    Then the function $\mathcal{H}\colon [0, +\infty)^m \to \R \cup \{-\infty\}$ defined by
    \begin{equation}  \label{eq:def_value_funct}
        \alpha \mapsto \mathcal{H}(\alpha) \coloneqq \inf_{u \in U} H_{\alpha}(u)
    \end{equation}
    is concave. In particular, it is measurable.
\end{lemma}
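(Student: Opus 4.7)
My plan is to prove concavity of $\mathcal{H}$ by reducing to the fact that a pointwise infimum of affine functions is concave, and then derive measurability as a standard consequence of concavity on $\R^m$.

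For concavity, I would fix $u \in U$ and note that $\alpha \mapsto H_\alpha(u) = F(u) + \sum_{j=1}^m \alpha_j G_j(u)$ is affine in $\alpha \in [0,+\infty)^m$ (with values in $\R \cup \{+\infty\}$). Hence, for any $\alpha_0, \alpha_1 \in [0,+\infty)^m$ and $t \in [0,1]$, the pointwise identity
\[
H_{t\alpha_0 + (1-t)\alpha_1}(u) = t\, H_{\alpha_0}(u) + (1-t)\, H_{\alpha_1}(u)
\]
holds in the extended reals: if one of the terms $H_{\alpha_i}(u)$ equals $+\infty$, both sides are $+\infty$ under the standard conventions $0 \cdot (+\infty) = 0$ and $s \cdot (+\infty) = +\infty$ for $s > 0$. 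Taking the infimum over $u$ and invoking the elementary bound $\inf_u(\varphi + \psi) \geq \inf_u \varphi + \inf_u \psi$ then yields
\[
\mathcal{H}(t\alpha_0 + (1-t)\alpha_1) \geq t\, \mathcal{H}(\alpha_0) + (1-t)\, \mathcal{H}(\alpha_1),
\]
which is the desired concavity on $[0,+\infty)^m$.

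For the measurability claim, I would invoke the classical fact that a concave function on a convex subset of $\R^m$ taking values in $\R \cup \{-\infty\}$ is Lebesgue measurable. Concretely, concavity implies that for every $c \in \R$ the super-level set $\{\alpha \in [0,+\infty)^m : \mathcal{H}(\alpha) \geq c\}$ is convex, and every convex subset of $\R^m$ is Lebesgue measurable---its interior is open and its boundary has vanishing Lebesgue measure. Since this holds for every $c$, the function $\mathcal{H}$ is Lebesgue measurable. I do not anticipate any real obstacle: the whole argument is a two-line bookkeeping in extended-real arithmetic followed by a textbook fact about convex sub- and super-level sets, with the only mild subtlety being the handling of the value $+\infty$ in the affine identity.
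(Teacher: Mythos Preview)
Your proof is correct and follows essentially the same approach as the paper: both exploit that $\alpha \mapsto H_\alpha(u)$ is affine and that the infimum of affine functions is concave, with the paper's version being a terse one-line computation. Your treatment is more explicit about the extended-real conventions and actually spells out the measurability step via convexity of super-level sets, which the paper simply asserts.
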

\begin{proof}
    Let us consider $\alpha,\alpha'\in[0,+\infty)^m$ and $\theta\in[0,1]$. Then, we compute:
    \begin{equation*}
        \begin{split}
            \mathcal{H}\big(\theta \alpha + (1-\theta)\alpha'\big) & = \inf_{u\in U} \left( F(u) + \sum_{j=1}^m\big(\theta \alpha_j + (1-\theta)\alpha'_j\big) G_j(u) \right) \\
                                                                   & \geq
            \theta\mathcal{H}(\alpha) + (1-\theta)\mathcal{H}(\alpha'). \qedhere
        \end{split}
    \end{equation*}
\end{proof}
We show the the measurability of $B_j$ as part of a stronger result.
We now introduce below some functions that will play a crucial role in our analysis.

\begin{defn} \label{def:G_pm_multi}
    Let $H_\alpha:U\to\Rext$   be defined as in \cref{eq:multi_regularization}.
    For every $\alpha\in [0, +\infty)^m$, we define
    \begin{equation*}
        S_{\alpha} \coloneqq \{(u_i)_{i \in \NN} \text{ minimizing sequence of } H_{\alpha}\},
    \end{equation*}
    and we set for every $j=1,\ldots,m$
    \begin{equation} \label{eq:def_G_pm_multi}
        G_{\alpha}^{j, +}  \coloneqq \sup_{(u_i) \in S_{\alpha}} \limsup_{i \to \infty} G_j(u_i),
        \qquad
        G_{\alpha}^{j, -}  \coloneqq \inf_{(u_i) \in S_{\alpha}} \liminf_{i \to \infty} G_j(u_i).
    \end{equation}
\end{defn}

We now investigate the properties of the functions $\alpha\mapsto G_\alpha^{j,\pm}$.
In what follows, we adopt the following conventions for the $\liminf$ and $\limsup$ of a function $g:[0,+\infty)\to \RextM$:
\begin{equation*}
    \begin{split}
         & \liminf_{\alpha'\to\alpha} g(\alpha') \coloneqq
        \inf \Big\{
        \liminf_{i\to\infty} g(\alpha'_i) \mid \lim_{i\to\infty}\alpha'_i =\alpha
        \,\, \mbox{ and } \,\,
        \alpha'_i\neq \alpha \,\, \forall i
        \Big\},                                            \\
         & \limsup_{\alpha'\to\alpha} g(\alpha') \coloneqq
        \sup \Big\{
        \limsup_{i\to\infty} g(\alpha'_i) \mid \lim_{i\to\infty}\alpha'_i =\alpha
        \,\, \mbox{ and } \,\,
        \alpha'_i\neq \alpha \,\, \forall i
        \Big\}.
    \end{split}
\end{equation*}
It is convenient to recall that every concave function $f$ is continuous in the relative interior of its effective domain, i.e., in the relative interior of the set $\{f>-\infty\}$. See, e.g., \cite[Theorem~10.1]{R97} for the convex setting.

\begin{lemma} \label{lem:finite_lims_G}
    Let $\mathcal{H}\colon [0,+\infty)^m\to \RextM$ be the value function associated to the parametric regularized functionals $H_\alpha\colon U\to \Rext$ with the form as in  \cref{eq:multi_regularization}.
    Let $\mathcal{O}\subset [0,+\infty)^m$ be an open set such that $\val(\alpha) > -\infty$ for every $\alpha\in \mathcal{O}$. Then, for every $j=1,\ldots,m$ we have:
    \begin{equation*}
        -\infty< \liminf_{\alpha'\to\alpha} G_{\alpha'}^{j,-} \leq \limsup_{\alpha'\to\alpha} G_{\alpha'}^{j,+} < +\infty.
    \end{equation*}
    In particular, there exists a neighborhood $\mathcal{O}'\ni\alpha$ with $\mathcal{O}'\subset \mathcal{O}$ such that the functions  $\alpha'\mapsto G_{\alpha'}^{i,\pm}$ are bounded, for every $i=1,\ldots,m$.
\end{lemma}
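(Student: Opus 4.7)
The plan is to reduce the multi-dimensional problem to a one-variable perturbation in the $j$-th coordinate direction, leveraging the continuity of the value function. By \cref{lemma:inf_measurable}, $\val$ is concave on $[0,+\infty)^m$; being finite throughout the open set $\mathcal{O}$, it is continuous and hence locally bounded there. Fix $\alpha\in\mathcal{O}$ and $j\in\{1,\ldots,m\}$, and choose $\delta>0$ small enough so that the closed ball $\overline{B(\alpha,2\delta)}$ is contained in $\mathcal{O}$ and so that $\val(\alpha'')\le C<+\infty$ for every $\alpha''$ in this ball. Setting $\alpha^\pm\coloneqq\alpha\pm\delta e_j$, both parameters belong to $\mathcal{O}$, so $\val(\alpha^\pm)$ is a fixed finite number.

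The heart of the argument is the first-order identity
\begin{equation*}
H_{\alpha^\pm}(u)=H_{\alpha'}(u)+(\alpha^\pm_j-\alpha'_j)\,G_j(u),
\end{equation*}
which follows directly from the definition \cref{eq:multi_regularization} and isolates the contribution of $G_j$ when only the $j$-th component of the parameter is perturbed. For any $\alpha'\in B(\alpha,\delta/2)$, the scalars $\alpha^+_j-\alpha'_j$ and $\alpha'_j-\alpha^-_j$ both lie in the interval $[\delta/2,3\delta/2]$, and in particular have fixed sign. Applied to any minimizing sequence $(u_i)\in S_{\alpha'}$, combined with $H_{\alpha^\pm}(u_i)\ge\val(\alpha^\pm)$, the identity yields
\begin{equation*}
\frac{\val(\alpha^+)-H_{\alpha'}(u_i)}{\alpha^+_j-\alpha'_j}\le G_j(u_i)\le\frac{\val(\alpha^-)-H_{\alpha'}(u_i)}{\alpha^-_j-\alpha'_j}.
\end{equation*}
Since $H_{\alpha'}(u_i)\to\val(\alpha')\le C$, for $i$ large enough both sides are controlled in absolute value by finite constants depending only on $\alpha,j,\delta,C,\val(\alpha^\pm)$ --- crucially, not on $\alpha'$ nor on the particular minimizing sequence. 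Passing to $\liminf_i$ and $\limsup_i$, and then to the infimum and supremum over $(u_i)\in S_{\alpha'}$, delivers finite constants $m_j,M_j$ such that
\begin{equation*}
m_j\le G^{j,-}_{\alpha'}\le G^{j,+}_{\alpha'}\le M_j\qquad\text{for every }\alpha'\in B(\alpha,\delta/2).
\end{equation*}

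Letting $\alpha'\to\alpha$ immediately yields $-\infty<\liminf_{\alpha'\to\alpha}G^{j,-}_{\alpha'}\le\limsup_{\alpha'\to\alpha}G^{j,+}_{\alpha'}<+\infty$; intersecting the $m$ neighborhoods thus produced for $j=1,\ldots,m$ gives the single $\mathcal{O}'\ni\alpha$ on which every $G^{i,\pm}_{\alpha'}$ is bounded, settling the \emph{in particular} clause. The step I expect to be most delicate is maintaining uniform sign and magnitude control of $\alpha^\pm_j-\alpha'_j$ as $\alpha'$ varies: this is what forces the strict nesting $B(\alpha,\delta/2)\subset\overline{B(\alpha,\delta)}\subset\mathcal{O}$ and relies on the local boundedness of the concave function $\val$ to secure a $C$ that is uniform across the neighborhood. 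Once this bookkeeping is in place, the rest is routine algebra from the displayed perturbation identity.
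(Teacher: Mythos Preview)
Your displayed identity $H_{\alpha^\pm}(u)=H_{\alpha'}(u)+(\alpha^\pm_j-\alpha'_j)\,G_j(u)$ is false as written. With $\alpha^\pm=\alpha\pm\delta e_j$ fixed and $\alpha'$ an arbitrary point of $B(\alpha,\delta/2)$, the parameters $\alpha^\pm$ and $\alpha'$ generally differ in \emph{every} coordinate, not only the $j$-th one; the correct identity from \cref{eq:multi_regularization} reads
\[
H_{\alpha^\pm}(u)=H_{\alpha'}(u)+(\alpha^\pm_j-\alpha'_j)\,G_j(u)+\sum_{k\neq j}(\alpha_k-\alpha'_k)\,G_k(u),
\]
and the cross-terms $\sum_{k\neq j}(\alpha_k-\alpha'_k)\,G_k(u_i)$ cannot be bounded without already controlling the values $G_k(u_i)$ along minimizing sequences for $H_{\alpha'}$ --- which is exactly what the lemma is trying to prove. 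So the argument as stated is circular.

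The repair is easy: perturb from $\alpha'$ rather than from $\alpha$, i.e.\ set $\alpha^\pm(\alpha')\coloneqq\alpha'\pm\delta e_j$. For $\alpha'\in B(\alpha,\delta/2)$ these points still lie in $\overline{B(\alpha,2\delta)}\subset\mathcal{O}$, so $\val(\alpha^\pm(\alpha'))$ is uniformly bounded below by continuity of the concave function $\val$ on that compact set, and now the identity genuinely involves only $G_j$. The rest of your estimates go through verbatim. With this correction your approach coincides with the paper's: the paper also shifts the \emph{moving} parameter $\alpha_i$ by $\pm he_j$ (using $H_{\alpha_i-he_j}(u_i)=H_{\alpha_i}(u_i)-hG_j(u_i)$), only framed as a contradiction rather than as a direct uniform bound.
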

\begin{proof}
    The inequality $\liminf_{\alpha'\to\alpha} G_{\alpha'}^{j,-} \leq \limsup_{\alpha'\to\alpha} G_{\alpha'}^{j,+}$ follows directly from the fact that, by definition, $G_{\alpha'}^{i,-}\leq G_{\alpha'}^{i,+}$ for every $\alpha'\in[0,+\infty)^m$. Hence, we only have to establish the first and the last bound. \\
    We start by showing that $\limsup_{\alpha'\to\alpha} G_{\alpha'}^{j,+}<+\infty$ for every $j=1,\ldots,m$. Let us assume by contradiction that this is not the case for some $j\in \{1,\ldots,m\}$. Then, we can find sequences $(\alpha_i)_{i\in\NN}$ and $(u_i)_{i\in\NN}\subset U$ such that $\alpha_i\to\alpha$ as $i\to\infty$ and $H_{\alpha_i}(u_i) = \val(\alpha_i) + \e_i $ with $0\leq \e_i\leq 1/i$, and such that $\lim_{i\to\infty} G_j(u_i) =+\infty$. Hence, we choose $h>0$ such that $\overline{B_{2h}(\alpha)}\subset\mathcal{O}$, and we observe that
    \begin{equation*}
        \begin{split}
            \val(\alpha - he_j) & = \lim_{i\to\infty} \val(\alpha_i - he_j) \\
                                & \leq  \lim_{i\to\infty}
            H_{\alpha_i - he_j}(u_i)
            = \lim_{n\to \infty} \left(
            H_{\alpha_i}(u_i) - hG_j(u_i)
            \right)                                                         \\
                                & = \lim_{i\to \infty} \left(
            \val(\alpha_i) + \e_i - hG_j(u_i)
            \right) = \val(\alpha) -  h \lim_{i\to \infty}  G_j(u_i)  = -\infty,
        \end{split}
    \end{equation*}
    contradicting the fact that $\val>-\infty$ in $\mathcal{O}$. We notice that above we used the continuity of the concave function $\val$ at $\alpha,\alpha - he_j\in \mathcal{O}$ (see, e.g., \cite[Theorem~10.1]{R97}).\\
    The proof of the fact that $\liminf_{\alpha'\to\alpha} G_{\alpha'}^{j,-}>-\infty$ for every $j=1,\ldots,m$ follows the lines of the argument presented above. The only difference is that we consider the auxiliary sequence $(\alpha_i+he_j)_{i\in\NN}$ convergent to $\alpha+he_j\in \mathcal{O}$.
    Finally, the last part of the thesis follows automatically.
\end{proof}

The next proposition establishes a regularity result for the mappings $\alpha\mapsto G_\alpha^{j,\pm}$.

\begin{proposition} \label{prop:semicont_G}
    Let $\mathcal{H}\colon [0,+\infty)^m\to \RextM$ be the value function associated to the parametric regularized functionals $H_\alpha\colon U\to \Rext$ with the form as in  \cref{eq:multi_regularization}.
    If $\mathcal{O}\subset [0,+\infty)^m$ is an open set such that $\val(\alpha) > -\infty$ for every $\alpha\in \mathcal{O}$, then
    \begin{equation} \label{eq:semicont_G}
        G_\alpha^{j,+} = \limsup_{\alpha'\to\alpha} G_{\alpha'}^{j,+} = \limsup_{\alpha'\to\alpha} G_{\alpha'}^{j,-} \quad \mbox{ and } \quad
        G_\alpha^{j,-} = \liminf_{\alpha'\to\alpha} G_{\alpha'}^{j,-} = \liminf_{\alpha'\to\alpha} G_{\alpha'}^{j,+}
    \end{equation}
    for every $j=1,\ldots,m$ and for every $\alpha\in \mathcal{O}$. Moreover, we have:
    \begin{equation} \label{eq:side_continuity}
        G_\alpha^{j,+} = \lim_{h\searrow 0} G_{\alpha - he_j}^{j,+} \quad \mbox{ and } \quad
        G_\alpha^{i,-} = \lim_{h\searrow 0} G_{\alpha + he_j}^{j,-}
    \end{equation}
    for every $j=1,\ldots,m$ and for every $\alpha\in \mathcal{O}$.
\end{proposition}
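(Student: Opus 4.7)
My plan is to first establish the one-sided limits in \eqref{eq:side_continuity} and then derive the multi-directional identities \eqref{eq:semicont_G} by combining them with a diagonal extraction that handles non-axial perturbations. Restricted to the $e_j$-slice, the family $t \mapsto H_{\alpha + t e_j}$ is single-regularized (with base $F + \sum_{k \neq j} \alpha_k G_k$ and regularizer $G_j$), so the Monotonicity Lemma~\ref{lem:monotone_G_relaxed} applies and yields, for $h > 0$, the chain
\begin{equation*}
    G_{\alpha - h e_j}^{j,+} \geq G_{\alpha - h e_j}^{j,-} \geq G_\alpha^{j,+} \geq G_\alpha^{j,-} \geq G_{\alpha + h e_j}^{j,+} \geq G_{\alpha + h e_j}^{j,-}.
\end{equation*}
This immediately gives monotonicity of $h \mapsto G_{\alpha \pm h e_j}^{j,\pm}$ and hence existence of the one-sided limits, together with $\lim_{h \searrow 0} G_{\alpha - h e_j}^{j,+} \geq G_\alpha^{j,+}$ and $\lim_{h \searrow 0} G_{\alpha + h e_j}^{j,-} \leq G_\alpha^{j,-}$.

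For the reverse inequalities the key is a diagonal construction. For a positive sequence $h_n \searrow 0$, I would pick for each $n$ a minimizing sequence $(u_i^n)_i$ of $H_{\alpha - h_n e_j}$ with $\limsup_i G_j(u_i^n) \geq G_{\alpha - h_n e_j}^{j,+} - 1/n$, and then extract $v_n = u_{i_n}^n$ with $i_n$ so large that simultaneously $H_{\alpha - h_n e_j}(v_n) \leq \mathcal{H}(\alpha - h_n e_j) + 1/n$ and $G_j(v_n) \geq G_{\alpha - h_n e_j}^{j,+} - 2/n$ (achievable along the subsequence realizing the $\limsup$). By \cref{lem:finite_lims_G} the quantity $G_{\alpha'}^{j,+}$ is bounded on a neighborhood of $\alpha$, hence $G_j(v_n)$ is bounded and $h_n G_j(v_n) \to 0$. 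Together with continuity of the concave value function $\mathcal{H}$ at $\alpha$, this yields $H_\alpha(v_n) = H_{\alpha - h_n e_j}(v_n) + h_n G_j(v_n) \to \mathcal{H}(\alpha)$, so $(v_n)_n$ is a minimizing sequence for $H_\alpha$, and by definition $G_\alpha^{j,+} \geq \limsup_n G_j(v_n) \geq \lim_{h \searrow 0} G_{\alpha - h e_j}^{j,+}$, closing the first identity. The second identity in \eqref{eq:side_continuity} follows symmetrically by perturbing in $+ h e_j$.

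Turning to \eqref{eq:semicont_G}, the one-sided limits already give $\limsup_{\alpha' \to \alpha} G_{\alpha'}^{j,+} \geq G_\alpha^{j,+}$ (via $\alpha' = \alpha - h e_j$), while the chain above gives $\limsup_{\alpha' \to \alpha} G_{\alpha'}^{j,-} \geq G_\alpha^{j,+}$ (since $G_{\alpha - h e_j}^{j,-} \geq G_\alpha^{j,+}$ for $h > 0$). Trivially $G_{\alpha'}^{j,-} \leq G_{\alpha'}^{j,+}$, so once we establish $\limsup_{\alpha' \to \alpha} G_{\alpha'}^{j,+} \leq G_\alpha^{j,+}$ all three quantities in the first identity of \eqref{eq:semicont_G} coincide; the $\liminf$ identity is dual.

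The main obstacle is precisely the bound $\limsup_{\alpha' \to \alpha} G_{\alpha'}^{j,+} \leq G_\alpha^{j,+}$ for arbitrary $\alpha' \to \alpha$, because $e_j$-monotonicity no longer controls off-axis perturbations. I argue by contradiction: if $\alpha_n \to \alpha$ and $G_{\alpha_n}^{j,+} \to M > G_\alpha^{j,+}$, I pick a minimizing sequence $(u_i^n)_i$ of $H_{\alpha_n}$ with $\limsup_i G_j(u_i^n) \geq G_{\alpha_n}^{j,+} - 1/n$ and extract $v_n = u_{i_n}^n$ satisfying (i) $H_{\alpha_n}(v_n) \leq \mathcal{H}(\alpha_n) + 1/n$, (ii) $G_j(v_n) \geq M - 3/n$ along the $\limsup$-realizing subsequence, and, crucially, (iii) $|G_k(v_n)| \leq C$ for every $k \neq j$, where $C$ is a uniform bound for $|G_{\alpha'}^{k,\pm}|$ on a shared neighborhood of $\alpha$ provided by \cref{lem:finite_lims_G}. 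Condition (iii) is achievable because $\liminf_i G_k(u_i^n) \geq G_{\alpha_n}^{k,-}$ and $\limsup_i G_k(u_i^n) \leq G_{\alpha_n}^{k,+}$ trap $G_k(u_i^n)$ inside $[-C-1, C+1]$ for large $i$, and this tail constraint is compatible with the cofinal subsequence giving (ii). The uniform bound on all $G_k(v_n)$ then forces $\sum_k (\alpha_k - \alpha_{n,k}) G_k(v_n) \to 0$, so $H_\alpha(v_n) = H_{\alpha_n}(v_n) + \sum_k (\alpha_k - \alpha_{n,k}) G_k(v_n) \to \mathcal{H}(\alpha)$ by continuity of $\mathcal{H}$, making $(v_n)_n$ a minimizing sequence of $H_\alpha$. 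Then $G_\alpha^{j,+} \geq \limsup_n G_j(v_n) \geq M$ contradicts $M > G_\alpha^{j,+}$, which closes the proof.
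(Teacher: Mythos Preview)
Your argument is correct and follows essentially the same route as the paper: the core diagonal construction---extracting near-minimizers $v_n$ at nearby parameters, invoking \cref{lem:finite_lims_G} to bound all $G_k(v_n)$ uniformly, and using continuity of the concave value function $\mathcal{H}$ to show $(v_n)_n$ is minimizing for $H_\alpha$---is exactly what the paper does. The only difference is ordering: you prove the axial identity \eqref{eq:side_continuity} first via a separate (redundant) diagonal argument and then the full \eqref{eq:semicont_G}, whereas the paper establishes \eqref{eq:semicont_G} directly and reads off \eqref{eq:side_continuity} from the chain $G_\alpha^{j,+} \leq \lim_{h\searrow 0} G_{\alpha - he_j}^{j,+} \leq \limsup_{\alpha'\to\alpha} G_{\alpha'}^{j,+} = G_\alpha^{j,+}$.
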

\begin{proof}
    Let us take $\hat{j} \in \{1,\ldots,m\}$ and $\alpha \in \mathcal{O}$. We detail the identity for $G_\alpha^{\hat{j},+}$.
    In virtue of the Monotonicity Lemma for a single regularizer (\cref{lem:monotone_G_relaxed}), we observe that
    \begin{equation} \label{eq:semic_G_part1}
        G_\alpha^{ \hat{j} ,+} \leq \lim_{h\searrow 0} G_{\alpha - he_{\hat{j}}}^{ \hat{j},-}
        \leq \limsup_{\alpha'\to\alpha} G_{\alpha'}^{\hat{j} ,-}
        \leq \limsup_{\alpha'\to\alpha} G_{\alpha'}^{\hat{j} ,+},
    \end{equation}
    where we applied \cref{lem:monotone_G_relaxed} to the regularization parameter $h>0$.
    We are left to prove that $G_\alpha^{\hat{j} ,+}\geq \limsup_{\alpha'\to\alpha} G_{\alpha'}^{ \hat{j} ,+}$.
    Let us consider $(\alpha_i)_{i\in\NN}$ such that $\alpha_i\to\alpha$ as $i\to\infty$ and $\lim_{i\to\infty} G^{\hat{j} ,+}_{\alpha_i} = \limsup_{\alpha'\to\alpha} G_{\alpha'}^{ \hat{j} ,+}$.
    Invoking \cref{lem:finite_lims_G}, we can assume that $(\alpha_i)_{i\in\NN}\subset \mathcal{O}'$, so that there exists $C > 0$ satisfying $\sup_{i\in\NN} |G_{\alpha_i}^{j,\pm}| \leq C$ for every $j=1,\ldots,m$.
    Moreover, we construct $(u_i)_{i\in\NN}\subset U$ such that  $H_{\alpha_i}(u_i) = \val(\alpha_i) + \e_i $ with $0\leq \e_i\leq 1/i$, and such that $|G_{\hat{j}}(u_i) - G^{\hat{j},+}_{\alpha_i}|\leq 1/i$ and $|G_j(u_i)|\leq 2C$ for every $j=1,\ldots,m$.
    We claim that $(u_i)_{i\in\NN}$ is a minimizing sequence for $H_\alpha$.
    Indeed, we compute
    \begin{equation*}
        \begin{split}
            \lim_{i\to\infty} H_\alpha(u_i) & =  \lim_{i\to\infty} \left( H_{\alpha_i}(u_i) + \sum_{j=1}^m(\alpha^j - \alpha^j_i)G_j(u_i) \right)                     \\
                                            & = \lim_{i\to\infty} \left( \val(\alpha_i) + \e_i +  \sum_{j=1}^m(\alpha^j - \alpha^j_i)G_j(u_i) \right) = \val(\alpha),
        \end{split}
    \end{equation*}
    showing that the claim is true, where we used the continuity of the concave value function $\val$ at $\alpha$ and the boundedness of the functions $G_1,\ldots,G_m$ along $(u_i)_{i\geq 1}$.
    Hence, $(u_i)_{i\geq 1}$ being a minimizing sequence for $H_\alpha$, the definition of $G_\alpha^{\hat j,+}$ yields
    \begin{equation*}
        G_\alpha^{ \hat j ,+}\geq \limsup_{i \to\infty} G_{\hat j}(u_i) = \lim_{i \to\infty} G^{ \hat j ,+}_{\alpha_i} = \limsup_{\alpha'\to\alpha} G_{\alpha'}^{ \hat j ,+},
    \end{equation*}
    which is the desired inequality. For the identity in \cref{eq:side_continuity}, it is sufficient to apply \eqref{eq:semicont_G} to the chain of inequalities $G_\alpha^{ \hat{j} ,+} \leq \lim_{h\searrow 0} G_{\alpha - he_{\hat{j}}}^{ \hat{j},+}
        \leq \limsup_{\alpha'\to\alpha} G_{\alpha'}^{\hat{j} ,+}$, which follows from \cref{lem:monotone_G_relaxed}. \\
    The arguments for the identities concerning $G_\alpha^{\hat j ,-}$
    are analogous.
\end{proof}

Finally, we are in position for establishing the measurability of the sets $B_1,\ldots,B_m$.

\begin{corollary} \label{cor:B_measurable}
    The sets $B_1,\ldots,B_m$ defined as in \cref{eq:def_sets_Bj} are Lebesgue-measurable.
\end{corollary}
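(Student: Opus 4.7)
The plan is to combine the semicontinuity information from \cref{prop:semicont_G} with the regularity of the effective domain of the concave value function $\val$ given by \cref{lemma:inf_measurable}. In essence, on the open set where $\val > -\infty$, the functions $\alpha \mapsto G_\alpha^{j,+}$ and $\alpha \mapsto G_\alpha^{j,-}$ are Borel measurable, while the complementary part of $\{\val > -\infty\}$ is Lebesgue-negligible.

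First, I would let $\mathcal{O} \subset [0,+\infty)^m$ denote the interior (relative to $\R^m$) of the set $\{\alpha \in [0,+\infty)^m : \val(\alpha) > -\infty\}$. Since $\val$ is concave by \cref{lemma:inf_measurable}, its effective domain is convex, hence Lebesgue-measurable; in particular, its topological boundary has Lebesgue measure zero in $\R^m$, so $\{\val > -\infty\} \setminus \mathcal{O}$ is contained in a null set.

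Second, I would apply \cref{prop:semicont_G} on $\mathcal{O}$: the identities $G_\alpha^{j,+} = \limsup_{\alpha'\to\alpha} G_{\alpha'}^{j,+}$ and $G_\alpha^{j,-} = \liminf_{\alpha'\to\alpha} G_{\alpha'}^{j,-}$ say exactly that $\alpha \mapsto G_\alpha^{j,+}$ is upper semicontinuous and $\alpha \mapsto G_\alpha^{j,-}$ is lower semicontinuous on the open set $\mathcal{O}$ (they are also real-valued there by \cref{lem:finite_lims_G}). Both functions are therefore Borel measurable on $\mathcal{O}$, so the sublevel/superlevel comparison
\[
    B_j \cap \mathcal{O} = \big\{ \alpha \in \mathcal{O} : G_\alpha^{j,+} > G_\alpha^{j,-} \big\}
\]
is a Borel subset of $\mathcal{O}$, hence Lebesgue-measurable.

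Finally, $B_j \setminus \mathcal{O}$ is contained in $\{\val > -\infty\} \setminus \mathcal{O}$, which lies in the boundary of a convex subset of $\R^m$ and therefore has Lebesgue measure zero; any such subset is automatically Lebesgue-measurable. Writing $B_j = (B_j \cap \mathcal{O}) \cup (B_j \setminus \mathcal{O})$ then exhibits $B_j$ as the union of a Borel set and a Lebesgue-null set, yielding the thesis. The only subtle point, and the one I would double-check, is the claim that the boundary of a convex set in $\R^m$ has Lebesgue measure zero (standard; e.g., derived from the fact that such a boundary is locally the graph of a Lipschitz function along each supporting hyperplane direction), since without it we would not control the part of $B_j$ lying on the relative boundary of the effective domain of $\val$.
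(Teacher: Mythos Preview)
Your proposal is correct and follows essentially the same route as the paper: both arguments split $B_j$ into its intersection with the interior of $\{\val>-\infty\}$ (where \cref{prop:semicont_G} gives upper/lower semicontinuity of $G_\alpha^{j,\pm}$ and hence Borel measurability of $\{G_\alpha^{j,+}>G_\alpha^{j,-}\}$) and the remainder, which is contained in the boundary of a convex set and hence Lebesgue-null. The only cosmetic difference is that the paper phrases the decomposition via $\partial\{\val>-\infty\}$ and $\mathrm{int}\{\val>-\infty\}$ directly, while you introduce $\mathcal{O}$ as the interior and handle $B_j\setminus\mathcal{O}$.
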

\begin{proof}
    Recalling that by definition $B_j\subset \{\val > -\infty\}$ (see \cref{eq:def_sets_Bj}), we can partition $B_j$ as follows:
    \begin{equation*}
        B_j = \Big(B_j \cap \partial \{\val > -\infty\} \Big) \cup
        \Big(B_j \cap \mathrm{int } \{\val > -\infty\} \Big),
    \end{equation*}
    where $\partial \{\val > -\infty\}, \mathrm{int } \{\val > -\infty\}$ denote, respectively, the boundary and the interior of the convex set $\{\val > -\infty\}$. Since the boundary of a convex subset of $\R^m$ is closed and its Lebesgue measure $\mathcal{L}_{\R^m}$ is null, we deduce that  $B_j \cap \partial \{\val > -\infty\}$ is Lebesgue measurable (and, in particular, its measure is zero as well).
    On the one hand, in the case $\mathrm{int } \{\val > -\infty\} = \emptyset$, the thesis follows immediately.
    On the other hand, if $\mathrm{int } \{\val > -\infty\} \neq \emptyset$, \cref{prop:semicont_G} guarantees that $\alpha\mapsto G_\alpha^{j,+}$ and $\alpha\mapsto G_\alpha^{j,-}$ are, respectively, upper and lower semi-continuous on $\mathrm{int } \{\val > -\infty\}$. In particular, this implies that the set $B_j \cap \mathrm{int } \{\val > -\infty\} = \{\alpha \mid G_\alpha^{j,+} > G_\alpha^{j,-}\} \cap \mathrm{int } \{\val > -\infty\}$ is measurable.
\end{proof}

We conclude this section with another immediate consequence of \cref{prop:semicont_G}.




\section{Trade-off Invariance Principle for critical points} \label{sec:critical}

In view of \cref{thm:principle}, a natural question is whether the Principle holds true for local minimizers as well.
In this section, we provide a positive answer, and we extend the Trade-off Invariance Principle to the critical points of a regularized functional $H_\alpha$ having the structure as in \cref{eq:def_H}.
To this end, we shall assume that $H_\alpha$ satisfies the celebrated {\L}ojasiewicz inequality (see \cite{lojasiewicz1963propriete}). In the present section, we shall equip the Banach space $U$ exclusively with the \emph{strong topology}.

\begin{theorem} \label{thm:crit_points}
    Let $U$ be a Banach space, and let us consider two functionals $F,G\colon U \to \R$ that are of class $C^1$, i.e., that are everywhere Fr\'echet-differentiable, and whose differentials $\nabla F, \nabla G\colon U\to U'$ are continuous.
    Defining $H_\alpha \coloneqq F + \alpha G$ for $\alpha\in [0,+\infty)$, we assume that $H_\alpha$ satisfies the local {\L}ojasiewicz inequality, i.e., for every $\alpha\in [0,+\infty)$ for every $u\in U$ there exists an open neighborhood $\mathcal{O}_u\ni u$ and constants $C_u>0$ and $\theta_u \in (1,2]$ such that
        \begin{equation}
            |H_\alpha(v) - H_\alpha(u)|\leq  C_u \| \nabla H_\alpha (v)  \|^{\theta_u}
        \end{equation}
        for every $v\in \mathcal{O}_u$.
        Finally, let $K\subset U$ be a strongly compact set.
        Then, for all but countable $\alpha\in [0,+\infty)$, if $u_1,u_2\in K$ are such that $\nabla H_\alpha(u_1) = \nabla H_\alpha(u_2)=0$ and $H_\alpha(u_1) = H_\alpha(u_2)$, then $G_\alpha(u_1) = G_\alpha(u_2)$.
\end{theorem}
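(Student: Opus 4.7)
My plan is to combine the Trade-off Invariance Principle (Corollary~\ref{thm:principle_refined}) applied to a countable family of restricted minimization problems on strongly compact subsets of $K$, together with the local rigidity of the critical set imposed by the Łojasiewicz inequality, in order to show that the exceptional set of $\alpha$'s for which the conclusion fails is countable.

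\emph{Finiteness of critical values and countable cover.} Fix $\alpha\in[0,+\infty)$. At any critical point $u_*\in K$ of $H_\alpha$ with $H_\alpha(u_*)=c$, the Łojasiewicz inequality forces every $v\in\mathcal{O}_{u_*}$ with $\nabla H_\alpha(v)=0$ to satisfy $|H_\alpha(v)-c|\leq C_{u_*}\cdot 0=0$, hence $H_\alpha(v)=c$. Using strong compactness of the closed set $K\cap\{\nabla H_\alpha=0\}$ and a finite subcover of the family $\{\mathcal{O}_u\}$, $H_\alpha$ admits only finitely many critical values on $K$, and each critical-level slice $K\cap\{\nabla H_\alpha=0\}\cap H_\alpha^{-1}(c)$ is strongly compact. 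Moreover, the strongly compact space $K$ is separable, and I fix once and for all a countable base $(V_n)_{n\in\NN}$ of the relative topology of $K$ made of traces on $K$ of open balls of $U$, each $\overline{V_n}\cap K$ being strongly compact.

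\emph{Applying the Principle and countable union of exceptions.} For each $n\in\NN$, $H_\alpha$ is continuous on the strongly compact set $\overline{V_n}\cap K$ and therefore attains its minimum. Applying Corollary~\ref{thm:principle_refined} to the restricted pair $(F|_{\overline{V_n}\cap K},G|_{\overline{V_n}\cap K})$ yields a countable set $E_n^-\subset[0,+\infty)$ such that, for every $\alpha\notin E_n^-$, $G$ is constant on $\arg\min_{\overline{V_n}\cap K}H_\alpha$. A symmetric argument applied to $(-F,-G)$ produces countable sets $E_n^+$ controlling $\arg\max_{\overline{V_n}\cap K}H_\alpha$. Setting $E:=\bigcup_{n\in\NN}(E_n^-\cup E_n^+)$, the set $E$ remains countable.

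\emph{Main obstacle.} For $\alpha\notin E$ and two critical points $u_1,u_2\in K$ of $H_\alpha$ with $H_\alpha(u_1)=H_\alpha(u_2)=c$, I aim to deduce $G(u_1)=G(u_2)$. I expect the hardest part of the proof to be the case in which $u_i$ is not a local extremum of $H_\alpha$ (a saddle-type critical point), since then $u_i$ does not a priori belong to $\arg\min_{\overline{V_n}\cap K}H_\alpha$ nor to $\arg\max_{\overline{V_n}\cap K}H_\alpha$ for any $V_n$ in the countable base. Here the Łojasiewicz inequality is expected to be used in an essential way, through the classical finite-length property of its gradient trajectories and the critical-level rigidity established above: these together should enable to realize each $u_i$ as a minimizer (or a maximizer) of $H_\alpha$ restricted to a suitably chosen compact subset of $K$, taken from the countable family $(\overline{V_n}\cap K)_n$ after possibly enriching it with countably many further compact sets (for instance, traces of descent basins constructed via the gradient flow). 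Once this identification is available, the constancy of $G$ on the corresponding extremal sets furnished by the previous step, together with a short patching argument using strong compactness of $K$ and the finiteness of critical levels, transfers to the conclusion $G(u_1)=G(u_2)$ and completes the proof.
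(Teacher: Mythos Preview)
Your strategy has a genuine gap at precisely the point you flag as the ``main obstacle'': critical points that are not local extrema. The reduction to Corollary~\ref{thm:principle_refined} only controls $G$ on $\arg\min_{\overline{V_n}\cap K}H_\alpha$ (or $\arg\max$), and a saddle-type critical point $u_*$ of $H_\alpha$ never belongs to either set for any $V_n$ containing $u_*$ in its interior, since every neighborhood of $u_*$ contains points where $H_\alpha$ is both strictly larger and strictly smaller than $H_\alpha(u_*)$. The proposed fix via gradient-flow ``descent basins'' does not close this gap: in a general Banach space there is no gradient flow available without further structure, and even when there is, the descent basin of $u_*$ depends on $\alpha$, so you cannot fix a countable $\alpha$-independent family of compact sets in advance and still guarantee that every critical point of every $H_\alpha$ is an extremum on one of them. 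The \L ojasiewicz inequality certainly does not exclude saddles (e.g.\ $H(x,y)=x^2-y^2$ on $\R^2$ satisfies it at the origin with $\theta=2$), so this case cannot be dismissed.

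The paper's argument is entirely different and does not pass through restricted minimization at all. It argues by contradiction: if the exceptional set were uncountable, one could extract a monotone sequence $\alpha^i\searrow\bar\alpha$ together with critical pairs $u_1^i,u_2^i\in K$ at the same $H_{\alpha^i}$-level but with $G(u_1^i)-G(u_2^i)\geq\varepsilon>0$; compactness of $K$ yields limits $\bar u_1,\bar u_2$. The \L ojasiewicz inequality is then used \emph{quantitatively} for $H_{\bar\alpha}$ at $\bar u_1,\bar u_2$: since $\nabla H_{\alpha^i}(u_j^i)=0$ forces $\nabla H_{\bar\alpha}(u_j^i)=(\bar\alpha-\alpha^i)\nabla G(u_j^i)$, one gets $|H_{\bar\alpha}(u_j^i)-H_{\bar\alpha}(\bar u_j)|=O(|\alpha^i-\bar\alpha|^{\theta})=o(|\alpha^i-\bar\alpha|)$ because $\theta>1$. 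This $o$-term is dominated by the linear-in-$(\alpha^i-\bar\alpha)$ contribution coming from the fixed gap $G(\bar u_1)-G(\bar u_2)\geq\varepsilon$, and a short computation then forces $H_{\alpha^i}(u_1^i)>H_{\alpha^i}(u_2^i)$ for large $i$, contradicting the assumed equality. The key point you are missing is that the role of \L ojasiewicz here is not to stratify the critical set for a fixed $\alpha$, but to compare critical points \emph{across nearby values of $\alpha$} with an error that is superlinear in $|\alpha^i-\bar\alpha|$.
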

\begin{proof}
    We argue by contradiction, and we assume that the thesis is not true.
    In this case,
    the set
    \begin{equation} \label{eq:def_exept_set}
        \begin{split}
            E\coloneqq
            \big\{ \alpha \in [0,+\infty) \mid & \exists u_1,u_2\in K : \nabla H_\alpha(u_1) = \nabla H_\alpha(u_2) = 0, \\
                                               & \qquad
            H_\alpha(u_1) =  H_\alpha(u_2),\,  G(u_1) - G(u_2)>0         \big\}
        \end{split}
    \end{equation}
    consists of uncountably many elements.
    Similarly, if we define
    \begin{equation*}
        \begin{split}
            E^k\coloneqq
            \big\{ \alpha \in [0,+\infty) \mid & \exists u_1,u_2\in K : \nabla H_\alpha(u_1) = \nabla H_\alpha(u_2) = 0, \\
                                               & \qquad
            H_\alpha(u_1) =  H_\alpha(u_2),\,  G(u_1) - G(u_2)>1/k         \big\},
        \end{split}
    \end{equation*}
    we observe that
    \begin{equation*}
        E= \bigcup_{k=1}^\infty E^k,
    \end{equation*}
    and we conclude that there exists $\bar k$ such that $E^k$ contains uncountably many elements.
    Setting $\e\coloneqq 1/\bar k$, we have that there exists a monotone decreasing sequence $(\alpha^i)_{i\in\NN}\subset E^k$ and two sequences $(u_1^i)_{i\in\NN},(u_2^i)_{i\in\NN}$ such that:
    \begin{enumerate}
        \item $A_0\leq \alpha^{i+1}<\alpha^i \leq A_1$ for every $i\in\NN$, and $\alpha^i\to \bar\alpha$ as $i\to\infty$;
        \item $\nabla H_{\alpha^i}(u_1^i) = \nabla H_{\alpha^i}(u_2^i) = 0$;
        \item $H_{\alpha^i}(u_1^i) = H_{\alpha^i}(u_2^i)$;
        \item $G_{\alpha^i}(u_1^i) \geq  G_{\alpha^i}(u_2^i) + \e$.
    \end{enumerate}
    Moreover, leveraging on the compactness of $K$, up to the extraction of a non-relabelled subsequence, we can further assume that there exists $\bar u_1, \bar u_2 \in K$ such that $u_1^i\to \bar u_1$ and $u_2^i\to \bar u_2$ as $i\to \infty$.
    Passing to the limit in (4), we get that $G(\bar u_1)\geq G(\bar u_2) + \e$, which can be rephrased by saying that
    \begin{equation*}
        G(\bar u_1)\geq \bar G + \e/2 \qquad \mbox{and} \qquad G(\bar u_2)\leq \bar G - \e/2,
    \end{equation*}
    where we set $\bar G\coloneqq \frac{G(\bar u_1) + G(\bar u_2)}{2}$.
    In view of finding a contradiction, we aim at showing that for $i$ large enough we have
    \begin{equation} \label{eq:contradict_ineq}
        H_{\alpha^i}(u_1^i) - \alpha^i\bar G > H_{\bar \alpha} (\bar u_1) - \bar\alpha \bar G \qquad
        \mbox{and} \qquad
        H_{\bar \alpha} (\bar u_2) - \bar\alpha \bar G > H_{\alpha^i}(u_2^i) - \alpha^i\bar G,
    \end{equation}
    which, combined with the fact that $H_{\bar \alpha}(\bar u_1) = H_{\bar \alpha}(\bar u_2)$, would result in $H_{\alpha^i}(u_1^i) > H_{\alpha^i}(u_2^i)$, that contradicts with the definition itself of $u_1^i,u_2^i$.
    In order to establish the first relation in \eqref{eq:contradict_ineq},we begin by applying the local {\L}ojasiewicz inequality to the functional $H_{\bar \alpha}$ at the point $\bar u_1$, and we denote with $\mathcal{O}\ni \bar u_1$, with $C_1>0$, and with $\theta_1\in (1,2]$, respectively, the neighborhood of $\bar u_1$, the constant, and the exponent prescribed by this assumption.
    Recalling that $u^i_1\to \bar u_1$ as $i\to\infty$, we deduce that there exists $\bar i \geq 1$ such that we have
    \begin{itemize}
        \item[(a)] $G(u_1^i)\geq \bar G + \e/4$,
        \item[(b)] $|H_{\bar \alpha} (u_1^i) - H_{\bar \alpha} (\bar u_1)|\leq  C_1 \| \nabla H_{\bar \alpha} (v)  \|^{\theta_1}$,
    \end{itemize}
    for every $i\geq \bar i$. Moreover, we recall that, since $K\subset U$ is compact and $\nabla G$ is continuous, there exists $\kappa>0$ such that $\sup_{u\in K} \| \nabla G(u) \|\leq\kappa$.
    Finally, we compute:
    \begin{equation*}
        \begin{split}
            H_{\alpha^i}(u_1^i) - \alpha^i\bar G - \left( H_{\bar \alpha} (\bar u_1) - \bar\alpha \bar G \right)
             & =
            H_{\alpha^i}(u_1^i) - H_{\bar \alpha}(\bar u_1) - (\alpha^i-\bar \alpha ) \bar G                                        \\
             & \geq H_{\alpha^i}(u_1^i) - H_{\bar \alpha}(u_1^i) - (\alpha^i-\bar \alpha ) \bar G
            - \left| H_{\bar \alpha}(u_1^i) - H_{\bar \alpha}(\bar u_1) \right|                                                     \\
             & =  (\alpha^i-\bar \alpha ) (G(u_1^i) -\bar G) - \left| H_{\bar \alpha}(u_1^i) - H_{\bar \alpha}(\bar u_1) \right|    \\
             & \geq \frac\e4 (\alpha^i-\bar \alpha ) - C_1 \left\| \nabla F(u_1^i) + \bar\alpha \nabla G(u_1^i) \right\|^{\theta_1} \\
             & = \frac\e4 (\alpha^i-\bar \alpha ) - C_1 \left\|  (\bar\alpha - \alpha^i) \nabla G(u_1^i) \right\|^{\theta_1}        \\
             & \geq \frac\e4 (\alpha^i-\bar \alpha ) - C_1 \kappa^{\theta_1} |\bar\alpha - \alpha^i|^{\theta_1},
        \end{split}
    \end{equation*}
    where we used that, by construction, $\nabla H_{\alpha^i}(u_1^i) = \nabla F (u_1^i) + \alpha^i  \nabla G (u_1^i)=0$. Recalling that $\theta_1\in (1,2]$ and that $\alpha^i-\bar \alpha\searrow 0$ as $i\to\infty$, we obtain the first inequality of \eqref{eq:contradict_ineq}. The second relation follows by repeating \emph{verbatim} the same argument to the difference  $H_{\bar \alpha} (\bar u_2) - \bar\alpha \bar G  - \left(  H_{\alpha^i}(u_2^i) - \alpha^i\bar G \right)$, and by noting that $G(u_2^i)\leq \bar G  -\e/4$ for $i$ sufficiently large.
\end{proof}

\begin{remark}
    In the case $U$ is $\sigma$-compact, i.e., if $U$ is finite-dimensional, the statement of \cref{thm:crit_points} is true even if we allow the critical points $u_1,u_2$ to be chosen in the whole domain $U$.
    More in general, we can avoid the restriction to a compact set $K\subset U$ if there exists a $\sigma$-compact subset of $U$ that contains every critical point of $H_\alpha$ for every $\alpha \in [0,+\infty)$.
\end{remark}



\section{Consequences and applications} \label{sec:consequences}

In order to exemplify and highlight the relevance of the results stated in  \cref{sec:intro}, we collect below some of their more immediate implications.

\subsection{Differentiability points of the value function} \label{subsec:diff_value}
In this paragraph, we relate the Trade-off Invariance Principle to the points where the value function $\mathcal{H}$ (see \cref{eq:def_value_funct}) is differentiable.
This provides a complete generalization of the results obtained in \cite[Section~2]{ito2011multiparameter}. Namely, in their work, the authors equipped the domain with a topological structure, and they further assume $F,G_1,\ldots,G_m$ to be coercive and lower semi-continuous. In this way, their arguments could rely on the existence of minimizers of $H_\alpha = F + \sum_{j=1}^m\alpha_jG_j$ for every $\alpha = (\alpha_1,\ldots,\alpha_m) \in [0,+\infty)^m$.  In addition, in \cite{ito2011multiparameter} the regularizers $G_1,\ldots,G_m$ are required to be non-negative.
As we are going to show below, the intertwining between the differentiability of the value function $\val$ and the Trade-off Invariance Principle holds more generally, and it does not require at all the existence of minimizers for the underlying regularized problems.

Let $\mathcal{O}\subset [0,+\infty)^m$ be an open set such that $\val(\alpha) > -\infty$ for every $\alpha\in \mathcal{O}$.
For every $\alpha\in \mathcal{O}$ we set the following notations:
\begin{equation*}
    \partial^+_j \val (\alpha) \coloneqq
    \lim_{h\to 0^+} \frac{\val(\alpha + he_j) - \val(\alpha)}{h},
    \qquad
    \partial^-_j \val (\alpha) \coloneqq
    \lim_{h\to 0^+} \frac{\val(\alpha - he_j) - \val(\alpha)}{-h}
\end{equation*}
for every $j=1,\ldots,m$, where $e_j$ is the $j$-th element of the standard basis of $\R^m$.
We recall that the existence of the limits follows directly from the concavity of $\val$ (see \cref{lemma:inf_measurable}).


\begin{lemma} \label{lem:ident_derivatives}
    Let $\mathcal{H}\colon [0,+\infty)^m\to \RextM$ be the value function associated to the parametric regularized functionals $H_\alpha\colon U\to \Rext$ with the form as in  \cref{eq:multi_regularization}.
    Moreover, for every $i=1,\ldots,m$ let $G_\alpha^{j,+},G_\alpha^{j,-}$ be as in \cref{def:G_pm_multi}.
    Finally, let $\mathcal{O}\subset [0,+\infty)^m$ be an open set such that $\val(\alpha) > -\infty$ for every $\alpha\in \mathcal{O}$.
    Then, for every $j=1,\ldots,m$ and for every $\alpha\in \mathcal{O}$, we have
    \begin{equation*}
        \partial^+_j \mathcal{H} (\alpha)
        = G_\alpha^{j,-}
        \leq G_\alpha^{j,+}
        = \partial^-_j \mathcal{H} (\alpha).
    \end{equation*}
\end{lemma}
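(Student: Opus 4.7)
My plan is to establish the chain of inequalities
\begin{equation*}
\partial^+_j\mathcal{H}(\alpha) \;\leq\; G_\alpha^{j,-} \;\leq\; G_\alpha^{j,+} \;\leq\; \partial^-_j\mathcal{H}(\alpha),
\end{equation*}
in which the middle inequality is immediate from \cref{def:G_pm_multi}. The lemma then follows once the two outer inequalities are promoted to equalities by matching bounds in the reverse direction. Throughout, I would freely use that $\mathcal{H}$ is concave (\cref{lemma:inf_measurable}) and hence continuous on the open set $\mathcal{O}$, so that the one-sided partial derivatives exist and are finite.

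For the easy half of the chain I would start from the trivial bound $\mathcal{H}(\alpha + he_j) \leq H_{\alpha+he_j}(u_i) = H_\alpha(u_i) + hG_j(u_i)$, valid for any minimizing sequence $(u_i)\in S_\alpha$ of $H_\alpha$ and any $h>0$. Dividing by $h$, sending $i\to\infty$ (so that $H_\alpha(u_i)\to\mathcal{H}(\alpha)$), and then taking the infimum over all $(u_i)\in S_\alpha$ yields $\tfrac{1}{h}\bigl(\mathcal{H}(\alpha+he_j)-\mathcal{H}(\alpha)\bigr)\leq G_\alpha^{j,-}$; letting $h\searrow 0$ produces $\partial^+_j\mathcal{H}(\alpha)\leq G_\alpha^{j,-}$. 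The symmetric manipulation with $\alpha - he_j$, $\limsup$ in place of $\liminf$, and supremum in place of infimum gives $\partial^-_j\mathcal{H}(\alpha)\geq G_\alpha^{j,+}$.

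The harder direction is to establish $G_\alpha^{j,-}\leq \partial^+_j\mathcal{H}(\alpha)$ and, symmetrically, $G_\alpha^{j,+}\geq \partial^-_j\mathcal{H}(\alpha)$. My strategy is a diagonal construction: pick $h_i\searrow 0$ and, along a minimizing sequence of $H_{\alpha + h_ie_j}$ whose $\liminf$ of $G_j$ is close to $G_{\alpha+h_ie_j}^{j,-}$, select $v_i\in U$ satisfying
\begin{equation*}
H_{\alpha + h_ie_j}(v_i) \leq \mathcal{H}(\alpha + h_ie_j) + h_i^2 \qquad\text{and}\qquad |G_j(v_i)|\leq C+1,
\end{equation*}
where $C>0$ is a uniform bound on $|G_{\alpha + he_j}^{j,\pm}|$ for all sufficiently small $h>0$, supplied by \cref{lem:finite_lims_G}. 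The uniform bound on $G_j(v_i)$ forces $h_iG_j(v_i)\to 0$, so that
\begin{equation*}
\mathcal{H}(\alpha)\leq H_\alpha(v_i) = H_{\alpha+h_ie_j}(v_i) - h_iG_j(v_i) \leq \mathcal{H}(\alpha+h_ie_j) + h_i^2 - h_iG_j(v_i) \longrightarrow \mathcal{H}(\alpha),
\end{equation*}
and hence $(v_i)\in S_\alpha$. Rearranging the same chain gives $G_j(v_i)\leq \tfrac{\mathcal{H}(\alpha+h_ie_j)-\mathcal{H}(\alpha)}{h_i}+h_i \to \partial^+_j\mathcal{H}(\alpha)$, so that $G_\alpha^{j,-}\leq \liminf_i G_j(v_i)\leq \partial^+_j\mathcal{H}(\alpha)$. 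The mirror-image construction based on almost-minimizers of $H_{\alpha - h_ie_j}$ closes the chain from the other end.

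The main obstacle I anticipate is precisely the step of guaranteeing that the constructed $(v_i)$ is a \emph{minimizing} sequence for $H_\alpha$: without the uniform $L^\infty$ control on $G_j(v_i)$, the perturbation $h_iG_j(v_i)$ need not vanish in the limit, and one could easily end up with a sequence that almost minimizes the perturbed functional $H_{\alpha+h_ie_j}$ but stays far from minimizing $H_\alpha$. Invoking \cref{lem:finite_lims_G} to absorb this difficulty is the pivotal technical point of the argument.
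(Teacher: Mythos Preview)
Your proposal is correct. The easy half (the inequalities $\partial^+_j\mathcal{H}(\alpha)\leq G_\alpha^{j,-}$ and $\partial^-_j\mathcal{H}(\alpha)\geq G_\alpha^{j,+}$) matches the paper's argument. For the hard half, however, you take a genuinely different route. The paper proves $\partial^-_j\mathcal{H}(\alpha)\leq G_\alpha^{j,+}$ by juxtaposing \emph{two} sequences---one almost minimizing $H_\alpha$ with $G_j$-values near $G_\alpha^{j,+}$, and one almost minimizing $H_{\alpha-h_ie_j}$ with $G_j$-values near $G_{\alpha-h_ie_j}^{j,+}$---and then closes the estimate by invoking the side-continuity identity $G_\alpha^{j,+}=\lim_{h\searrow 0}G_{\alpha-he_j}^{j,+}$ from \cref{prop:semicont_G}. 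You instead build a \emph{single} diagonal sequence $(v_i)$ of near-minimizers of $H_{\alpha+h_ie_j}$, use only the boundedness statement of \cref{lem:finite_lims_G} to force $h_iG_j(v_i)\to 0$, and conclude that $(v_i)\in S_\alpha$, which immediately yields $G_\alpha^{j,-}\leq\liminf_i G_j(v_i)\leq\partial^+_j\mathcal{H}(\alpha)$. This bypasses \cref{prop:semicont_G} entirely and is, in that sense, more self-contained; the paper's approach, on the other hand, makes the link with the semi-continuity structure of $\alpha\mapsto G_\alpha^{j,\pm}$ explicit. A minor remark: your clause ``whose $\liminf$ of $G_j$ is close to $G_{\alpha+h_ie_j}^{j,-}$'' is never actually used---any minimizing sequence eventually has $|G_j|$ bounded by $C+1$ thanks to $G_{\alpha+h_ie_j}^{j,-}\leq\liminf G_j\leq\limsup G_j\leq G_{\alpha+h_ie_j}^{j,+}$---so you can drop it.
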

\begin{proof}
    Let us start by showing that $
        \partial^-_j \mathcal{H} (\alpha) \geq G_\alpha^{j,+}$ for every $j=1,\ldots,m$.
    Let us take $j\in\{1,\ldots,m\}$ and $\alpha\in [0,+\infty)^m$ such that $\mathcal{H}(\alpha)>-\infty$.
    For every $u \in U$, we have
    \begin{equation*}
        \begin{split}
            \val(\alpha) - \val(\alpha-he_j) & \geq H_\alpha(u) + (\val(\alpha) - H_\alpha(u)) - H_{\alpha-he_j}(u) \\
                                             & = h G_j(u) + (\val(\alpha) - H_\alpha(u)).
        \end{split}
    \end{equation*}
    Let us consider a monotonically decreasing sequence $(h_i)_{i\in\NN}$ such that $h_i\searrow 0$, and let us take a minimizing sequence $(u_i)_{i\in\NN}$ for $H_\alpha$ such that $\lim_{i\to \infty}G_j(u_i)=G^{j,+}_\alpha$. Moreover, up to the extraction of a subsequence, we can assume that $H_\alpha(u_i)-\val(\alpha)\leq h_i^2$.
    If we compute $\partial_j^-\val(\alpha)$ along $(h_i)_{i\in\NN}$, from the previous estimate we deduce that
    \begin{equation*}
        \partial_j^-\val(\alpha) = \lim_{i\to \infty}
        \frac{\val(\alpha-h_ie_j) - \val(\alpha)}{-h_i}
        \geq \lim_{i\to \infty}G_j(u_i)=G^{j,+}_\alpha.
    \end{equation*}
    We now address $\partial^-_j \mathcal{H} (\alpha) \leq G_\alpha^{j,+}$ for every $j=1,\ldots,m$.
    As before, let us consider a monotonically decreasing sequence $(h_i)_{i\in\NN}$ such that $h_i\searrow 0$, and let us introduce the notation $\alpha_i\coloneqq \alpha - h_ie_j$. Moreover, we construct two sequences $(u_i)_{i\in\NN},(u_i')_{i\in\NN}\subset U$ such that
    \begin{equation*}
        H_{\alpha}(u_i) = \val(\alpha) + \e_i, \quad
        |G_j(u_i) - G_\alpha^{j,+}|\leq\frac1i,
    \end{equation*}
    and
    \begin{equation*}
        H_{\alpha_i}(u_i') = \val(\alpha_i) + \e_i', \quad
        |G_j(u_i') - G_{\alpha_i}^{j,+}| \leq\frac1i,
    \end{equation*}
    with $0\leq\e_i,\e_i'\leq h_i^2$ for every $i\in\NN$.
    From the fact that $\val(\alpha)\leq H_\alpha(u'_i)$, we deduce that
    \begin{equation} \label{eq:dist_H_alpha}
        H_\alpha(u_i) - H_\alpha(u'_i) \leq \e_i
    \end{equation}
    for every $i\in\NN$.
    Then, we compute
    \begin{equation} \label{eq:bound_der_2}
        \begin{split}
            \val(\alpha) - \val(\alpha_i) & =
            H_\alpha(u_i) - H_{\alpha_i}(u_i') -\e_i + \e_i'                                                                                     \\
                                          & = h_i G_j(u_i) + h_i \big( G_j(u'_i) - G_j(u_i) \big) + H_\alpha(u_i) - H_\alpha(u_i') -\e_i + \e_i' \\
                                          & \leq h_i G_j(u_i) + h_i \big( G_j(u'_i) - G_j(u_i) \big) + \e_i'                                     \\
                                          & = h_i G_j(u_i)                                                                                       \\
                                          & \qquad + h_i
            \left[ \big( G_{\alpha_i}^{j,+} - G_{\alpha}^{j,+} \big) + \big( G_j(u'_i) - G_{\alpha_i}^{j,+} \big) + \big( G_{\alpha}^{j,+} - G_j(u_i) \big)
                \right],
        \end{split}
    \end{equation}
    where we used \cref{eq:dist_H_alpha} to get the inequality.
    Recalling that $\alpha_i=\alpha-h_ie_j$, \cref{eq:side_continuity} implies that $G_{\alpha_i}^{j,+} - G_{\alpha}^{j,+}\to 0$ as $i\to \infty$. Therefore, if we divide by $h_i$ and we pass to the limit in \cref{eq:bound_der_2}, we obtain that
    \begin{equation*}
        \partial_j^-\val(\alpha) = \lim_{i\to \infty}
        \frac{\val(\alpha-h_ie_j) - \val(\alpha)}{-h_i}
        \leq \lim_{i\to \infty}G_j(u_i)=G^{j,+}_\alpha.
    \end{equation*}
    The identity $\partial^+_j \mathcal{H} (\alpha) = G_\alpha^{j,-}$ follows from the same arguments.
\end{proof}

We illustrate in the next result the relation between the differentiability of $\val$ and the Trade-off Invariance Principle.

\begin{proposition} \label{prop:different_value}
    Let $\mathcal{H}\colon [0,+\infty)^m\to \RextM$ be the value function associated to the parametric regularized functionals $H_\alpha\colon U\to \Rext$ with the form as in  \cref{eq:multi_regularization}.
    Moreover, for every $i=1,\ldots,m$ let $G_\alpha^{j,+},G_\alpha^{j,-}$ be as in \cref{def:G_pm_multi}.
    Finally, let $\mathcal{O}\subset [0,+\infty)^m$ be an open set such that $\val(\alpha) > -\infty$ for every $\alpha\in \mathcal{O}$.
    Then, for every $j=1,\ldots,m$ and every $\alpha\in \mathcal{O}$ the following are equivalent:
    \begin{enumerate}
        \renewcommand{\theenumi}{(\arabic{enumi})}
        \renewcommand{\labelenumi}{\theenumi}
        \item The partial derivative $\partial_j\val(\alpha)$ exists, i.e., $\partial_j^+\val(\alpha) = \partial_j^-\val(\alpha)$; \label{case:partial_deriv}
        \item The identity $G_\alpha^{j,+}=G_\alpha^{j,-}$ holds; \label{case:identity}
        \item The function $\alpha'\mapsto G_{\alpha'}^{j,+}$ is continuous at $\alpha$; \label{case:continuity_G+}
        \item The function $\alpha'\mapsto G_{\alpha'}^{j,-}$ is continuous at $\alpha$. \label{case:continuity_G-}
    \end{enumerate}
    Finally, the value function $\val$ is differentiable at $\alpha\in\mathcal{O}$ if and only if $G_\alpha^{j,+}=G_\alpha^{j,-}$ for every $j=1,\ldots,m$.
\end{proposition}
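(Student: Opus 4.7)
The plan is to exploit the two already-established cornerstones: the identification of one-sided partial derivatives with $G_\alpha^{j,\pm}$ provided by \cref{lem:ident_derivatives}, and the upper/lower semi-continuity statements for $\alpha'\mapsto G_{\alpha'}^{j,\pm}$ contained in \cref{prop:semicont_G}. Both hold on $\mathcal{O}$, and together they already encode the content of \ref{case:partial_deriv}--\ref{case:continuity_G-}: we just need to unpack the equivalences and then bootstrap from partial derivatives to full differentiability via concavity.

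First I would establish \ref{case:partial_deriv}$\iff$\ref{case:identity}. This is essentially free from \cref{lem:ident_derivatives}, which gives $\partial_j^+\val(\alpha)=G_\alpha^{j,-}$ and $\partial_j^-\val(\alpha)=G_\alpha^{j,+}$; hence $\partial_j\val(\alpha)$ exists if and only if these two one-sided derivatives coincide, which happens exactly when $G_\alpha^{j,-}=G_\alpha^{j,+}$.

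Next I would handle \ref{case:identity}$\iff$\ref{case:continuity_G+} and \ref{case:identity}$\iff$\ref{case:continuity_G-} in one sweep. From \cref{prop:semicont_G} we know
\[
\limsup_{\alpha'\to\alpha}G_{\alpha'}^{j,+}=G_\alpha^{j,+},\qquad \liminf_{\alpha'\to\alpha}G_{\alpha'}^{j,+}=G_\alpha^{j,-},
\]
and the analogous pair of identities for $G_{\alpha'}^{j,-}$. Continuity of $\alpha'\mapsto G_{\alpha'}^{j,+}$ at $\alpha$ amounts to $\limsup=\liminf$ there, which is precisely \ref{case:identity}; the same logic applied to $\alpha'\mapsto G_{\alpha'}^{j,-}$ yields the equivalence with \ref{case:continuity_G-}.

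For the final assertion on differentiability of $\val$, one direction is trivial: if $\val$ is (Fr\'echet/total) differentiable at $\alpha\in\mathcal O$, all partial derivatives exist, and by the established equivalence \ref{case:partial_deriv}$\iff$\ref{case:identity} we conclude $G_\alpha^{j,+}=G_\alpha^{j,-}$ for every $j$. For the converse, assume $G_\alpha^{j,+}=G_\alpha^{j,-}$ for every $j$; then every partial derivative $\partial_j\val(\alpha)$ exists. Here is where the main subtlety lies: pointwise existence of partial derivatives does not, in general, imply total differentiability. However, $\val$ is concave on $[0,+\infty)^m$ by \cref{lemma:inf_measurable}, and a classical result for concave (equivalently, convex) functions on $\R^m$ states that at any point of the relative interior of the effective domain where all partial derivatives exist, the function is differentiable (see, e.g., \cite[Theorem~25.2]{R97}). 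Since $\alpha\in\mathcal{O}\subset\mathrm{int}\{\val>-\infty\}$, this theorem applies and yields differentiability of $\val$ at $\alpha$, completing the proof. The main obstacle I anticipate is precisely invoking this concave-function fact correctly; everything else is bookkeeping on top of \cref{lem:ident_derivatives} and \cref{prop:semicont_G}.
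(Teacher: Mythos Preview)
Your proposal is correct and follows essentially the same approach as the paper: the equivalence \ref{case:partial_deriv}$\iff$\ref{case:identity} via \cref{lem:ident_derivatives}, the equivalences with \ref{case:continuity_G+} and \ref{case:continuity_G-} via the semi-continuity identities of \cref{prop:semicont_G}, and the upgrade to full differentiability via a standard convex-analysis fact from Rockafellar. The only cosmetic differences are that the paper routes the implication \ref{case:continuity_G+}$\Rightarrow$\ref{case:identity} through the Monotonicity Lemma and \cref{eq:side_continuity} rather than directly invoking $\liminf_{\alpha'\to\alpha}G_{\alpha'}^{j,+}=G_\alpha^{j,-}$, and that the paper argues the final differentiability statement by contraposition using the super-differential characterization \cite[Theorem~25.1]{R97} instead of \cite[Theorem~25.2]{R97}; both variants are valid and yours is marginally more direct.
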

\begin{proof}
    The equivalence \ref{case:partial_deriv}$\iff$\ref{case:identity} follows directly from \cref{lem:ident_derivatives}.
    To establish \ref{case:identity}$\implies$\ref{case:continuity_G+}, we use \cref{prop:semicont_G} to see that
    \begin{equation*}
        G_\alpha^{j,+} = \limsup_{\alpha'\to\alpha} G_{\alpha'}^{j,+}\geq \liminf_{\alpha'\to\alpha} G_{\alpha'}^{j,+} \geq \liminf_{\alpha'\to\alpha}  G_{\alpha'}^{j,-} =  G_{\alpha}^{j,-}.
    \end{equation*}
    Therefore, from the identity in \ref{case:identity}, it follows that $G_\alpha^{j,+} = \lim_{\alpha'\to\alpha} G_{\alpha'}^{j,+}$.
    Concerning the inverse implication \ref{case:continuity_G+}$\implies$\ref{case:identity}, the continuity of $\alpha'\mapsto G^{j,+}_{\alpha'}$ in \ref{case:continuity_G+} implies in particular $G^{j,+}_{\alpha}=\lim_{h\searrow 0}G^{j,+}_{\alpha + he_j}$. Moreover, the Monotonicity Lemma (see \cref{lem:monotone_G_relaxed}) applied to the one-dimensional regularization parameter $h>0$ yields
    \begin{equation*}
        G^{j,+}_{\alpha} \geq G^{j,-}_{\alpha} \geq G^{j,+}_{\alpha + h e_j},
    \end{equation*}
    so that, when letting $h\searrow 0$, we get $G^{j,+}_{\alpha} = G^{j,-}_{\alpha}$. This concludes the equivalence \ref{case:identity}$\iff$\ref{case:continuity_G+}.
    The bi-implication \ref{case:identity}$\iff$\ref{case:continuity_G-} follows from similar arguments. \\
    Now we address the last part of the thesis.
    Since the differentiability of $\val$ at $\alpha$ implies the existence of the partial derivatives $\partial_j\val(\alpha)$ for every $j=1,\ldots,m$, we immediately deduce that $G_\alpha^{j,+}=G_\alpha^{j,-}$ for every $j=1,\ldots,m$.
    For the inverse implication, let us assume that $\val$ is not differentiable at $\alpha$.
    Denoting with $\nabla \val(\alpha)\subset \R^m$ the super-gradient of the concave function $\val$ at $\alpha$, since $\alpha\in \mathcal{O}$, it follows that $\nabla \val(\alpha) \neq \emptyset$.
    Moreover, we recall that the differentiability of $\val$ at $\alpha$ holds if and only if $\nabla \val(\alpha)$ contains a single element (see, e.g., \cite[Theorem~25.1]{R97} for the convex setting).
    Hence, if $\val$ is not differentiable at $\alpha$, we can find $(p_1,\ldots,p_m),(p'_1,\ldots,p_m')\in \nabla \val(\alpha)$ such that there exists $j\in\{1,\ldots,m\}$ for which $p_j<p_j'$. Recalling the definition of super-gradient, this implies that
    \begin{equation*}
        \frac{\val(\alpha + he_j) - \val(\alpha)}{h} \leq
        p_j < p_j'
        \leq
        \frac{\val(\alpha - he_j) - \val(\alpha)}{-h}
    \end{equation*}
    for every $h>0$. Passing to the limit as $h\to 0$ in the previous expression, we get that $\partial_j^-\val(\alpha)>\partial_j^+\val(\alpha)$, which in turn implies that $G_\alpha^{j,+}>G_\alpha^{j,-}$ (see \cref{lem:ident_derivatives}).
\end{proof}

\subsection{Penalty functions in equality constrained optimization}
Let us consider the following constrained minimization problem:
\begin{equation} \label{eq:probl_constr}
    \min_{u\in U} F(u) \quad \mbox{subject to} \quad G(u)=0,
\end{equation}
where $F, G\colon U\to\Rext$. Up to substituting $G$ with $G^2$, it is not restrictive to assume that $G\geq 0$. In many situations, it is convenient to relax the hard-constrained problem reported in \cref{eq:probl_constr} by incorporating $G$ in the objective function as a penalty term for the violation of the constraint:
\begin{equation} \label{eq:probl_relax}
    \min_{u\in U} F(u) + \alpha G(u),
\end{equation}
with $\alpha\in (0,+\infty)$.
Owing to \cref{thm:principle_refined}, we obtain that, excluding at most countably many values of the penalization parameter $\alpha\in (0,+\infty)$, whenever different minimizers $u^\star_1\neq u^\star_2$ of the relaxed reformulation~\eqref{eq:probl_relax} exist, they are `equivalent' in terms of the constraint, i.e.~$G(u^\star_1)=G(u^\star_2)$.\\
As a relevant example, let us consider a control system in $\R^n$ of the form
\begin{equation*}
    \begin{cases}
        \dot x_u(t) = h(x_u(t),u(t)) & \mbox{for a.e. }t\in[0,T], \\
        x_u(0)=x_0,
    \end{cases}
\end{equation*}
where $h\colon \R^n\times \R^m\to \R^n$ is a smooth map such that $|h(x,v)|\leq C(1+|x|)(1+|v|^p)$ for every $x\in\R^n$ and $v\in\R^m$ ($1\leq p<\infty$), and $u \in U\coloneqq L^p([0,T],\R^m)$.
Moreover, let us consider the endpoint-constrained problem
\begin{equation}\label{eq:probl_constr_ctrl}
    \min_{u\in U} \int_0^T f(x_u(t),u(t)) \,\mathrm{d}t \quad \mbox{subject to} \quad x_u(T) = x_T
\end{equation}
where $f\colon \R^n\times \R^m\to \R$ is a continuous and bounded below function that prescribes the running cost, and $F(u)\coloneqq \int_0^T f(x_u(t),u(t)) \,dt$.
Let us relax the hard terminal-state constraint in \cref{eq:probl_constr_ctrl} by formulating the free-endpoint problem:
\begin{equation} \label{eq:probl_relax_ctrl}
    \min_{u\in U} \int_0^T f(x_u(t),u(t)) \, \mathrm{d}t + \alpha G(x_u(T))
\end{equation}
where $G\colon \R^n\to\R$ is such that $G\geq 0$ and $\{G=0\}=\{x_T\}$, and $\alpha\in(0,+\infty)$ tunes the penalty term.
Excluding at most countably many exceptional values of $\alpha$, if $u^\star_1, u^\star_2$ are optimal controls for the relaxed free-end point problem~\eqref{eq:probl_relax_ctrl}, it surprisingly turns out that $G(x_{u_1^\star}(T))= G(x_{u_2^\star}(T))$.

\subsection{Regularized functionals in Sobolev Spaces}

\cref{cor:strong_convergence} finds application to functionals that are defined on Sobolev spaces $W^{1,p}(\Omega,\R^m)$ with $1<p<\infty$ and $\Omega\subset\R^n$ open, and that are regularized with Sobolev semi-norms.
Namely, let us set $U\coloneqq W^{1,p}(\Omega,\R^m)$, and let us consider a functional $H_\alpha \colon U\to\Rext$ of the form
\begin{equation*}
    H_\alpha(u)\coloneqq F(u) + \alpha g(\|Du\|_{L^p}),
\end{equation*}
where $\alpha\in(0,+\infty)$, $Du\in L^p$ denotes the weak derivative of $u$, and $g\colon [0,+\infty)\to[0,+\infty)$ is a strictly monotone increasing and continuous function. Finally, we assume that $F\not \equiv +\infty$.
We recall that the weak convergence in $W^{1,p}$ ($1<p<\infty$) is characterized as follows: a sequence $(u_i)_{i\in\NN}\subset W^{1,p}$ is $W^{1,p}$-weakly convergent to $u_\infty$ if and only if $(u_i)_{i\in\NN}$ is $L^p$-strongly convergent to $u_\infty$ and $(D u_i)_{i\in\NN}$ is $L^p$-weakly convergent to $D u_\infty$.
Setting $G(u)\coloneqq g(\|Du\|_{L^p})$, it turns out that $G$ is a weakly-to-strong functional for $W^{1,p}$-weakly convergent sequences (see \cref{def:weak2strong}).
Then, by virtue of \cref{cor:strong_convergence}, we deduce that, with the exception of at most countably many $\alpha\in(0,+\infty)$, for a minimizing sequence $(u_i)_{i\in\NN}\subset W^{1,p}$, the properties of being weakly and strongly convergent to a minimizer are equivalent.

\begin{remark}
    The argument reported above can also be adapted to functionals of the form $H_\alpha(u)=F(u) + \alpha g\big (|Du|(\Omega) \big)$ defined over $BV(\Omega,\R^m)$, where $\Omega\subset \R^n$ is open, $g\colon [0,+\infty)\to[0,+\infty)$ is a strictly monotone increasing and continuous function, and $|Du|(\Omega)$ denotes the total variation of $u$. In this case, excluding at most countably many values of $\alpha$, if $(u_i)_{i\in\NN}$ is a minimizing sequence for $H_\alpha$ that is weakly$^*$ converging to a minimizer $u^\star$, then  $(u_i)_{i\in\NN}$ is actually \emph{strictly} converging to $u^\star$ (see~\cite[Section~3.1]{BV} for these notions of convergence).
\end{remark}

\subsection{Approximability through sequentially weakly pre-compact subsets}
We present here a negative result that is implied by \cref{cor:strong_convergence}.

\begin{proposition} \label{prop:negative}
    Let $U$ be a  Banach space and $\mathcal{D} \subseteq U$ be any subset. Let us consider $F\colon U \to \Rext$ such that $F\not\equiv +\infty$ and $G\colon U \to \R$ a \emph{weak-to-strong} functional, and let $H_\alpha\coloneqq F + \alpha G$ be the regularized functional, with $\alpha\in(0,+\infty)$.
    Then, for all but countable $\alpha \in (0, +\infty)$, the following holds: \\
    If $u^\star \in H^\star_\alpha$ is such that $\inf_{v\in \mathcal{D}} \| u^\star- v\|_U >0$, then it is not possible to find a sequence $(u_i)_{i\in\NN}\subset \mathcal{D}$ that is weakly converging to $u^\star$ and that is a minimizing sequence for $H_\alpha$.
    Moreover, if $H_\alpha$ is sequentially weakly lower semi-continuous, if
    $H^\star_\alpha$ is contained in the sequentially weak closure of $\mathcal{D}$
    and
    \begin{equation}\label{eq:D_far_away}
        \inf_{v\in \mathcal{D}} \| u^\star- v\|_U >0 \quad \forall u^\star \in H_\alpha^\star,
    \end{equation}
    then it is not possible to find a sequence $(u_i)_{i\in\NN}\subset \mathcal{D}$ that is weakly pre-compact and that is a minimizing sequence for $H_\alpha$.
    In particular, if $\mathcal{D}$ is sequentially weakly pre-compact, there is no minimizing sequence for $H_\alpha$ in $\mathcal{D}$.
\end{proposition}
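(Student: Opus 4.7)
The plan is to deduce both assertions directly from \cref{cor:strong_convergence}. Let $\mathcal{E} \subset (0,+\infty)$ denote the at-most-countable set of exceptional $\alpha$ on which the conclusion of that corollary may fail; throughout the argument I restrict to $\alpha \in (0,+\infty) \setminus \mathcal{E}$.

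For the first assertion, I would argue by contradiction. Assuming $u^\star \in H_\alpha^\star$ with $\delta \coloneqq \inf_{v \in \mathcal{D}} \|u^\star - v\|_U > 0$, and a minimizing sequence $(u_i)_{i \in \NN} \subset \mathcal{D}$ with $u_i \weak u^\star$, an application of \cref{cor:strong_convergence} upgrades weak to strong convergence, so that $u_i \to u^\star$ in $U$. This is incompatible with $\|u_i - u^\star\|_U \geq \delta > 0$ for every $i$, producing the desired contradiction.

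For the second assertion, I would again proceed by contradiction: suppose $(u_i)_{i \in \NN} \subset \mathcal{D}$ is weakly pre-compact and minimizing for $H_\alpha$. Extracting a weakly convergent subsequence $u_{i_k} \weak u_\infty$, the sequential weak lower semi-continuity of $H_\alpha$ yields
\begin{equation*}
H_\alpha(u_\infty) \leq \liminf_{k \to \infty} H_\alpha(u_{i_k}) = \inf_{u\in U} H_\alpha(u),
\end{equation*}
so $u_\infty \in H_\alpha^\star$. Hypothesis \eqref{eq:D_far_away} then gives $\inf_{v \in \mathcal{D}}\|u_\infty - v\|_U > 0$, and the first part of the proposition, applied to the minimizing subsequence $(u_{i_k}) \subset \mathcal{D}$ weakly converging to the minimizer $u_\infty$, produces the required contradiction. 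The final ``in particular'' clause is then immediate, since every sequence inside a sequentially weakly pre-compact set is itself weakly pre-compact.

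I do not foresee a genuine obstacle: the proof is essentially a repackaging of \cref{cor:strong_convergence}, with the sequential weak lower semi-continuity used only to identify weak limits of minimizing sequences as minimizers. The only point worth flagging is that the hypothesis ``$H_\alpha^\star$ contained in the sequentially weak closure of $\mathcal{D}$'' is not literally invoked in the derivation; it is present so that the conjunction with~\eqref{eq:D_far_away} describes a genuinely informative (non-vacuous) situation, namely one in which minimizers can be weakly, but never strongly, approximated by elements of $\mathcal{D}$.
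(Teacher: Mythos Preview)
Your proof is correct and follows exactly the route the paper intends: the proposition is stated there as a direct consequence of \cref{cor:strong_convergence}, without an explicit proof, and your argument supplies precisely the expected details. Your observation that the hypothesis ``$H_\alpha^\star$ is contained in the sequentially weak closure of $\mathcal{D}$'' is not actually used in the derivation is also accurate; as you say, it serves only to ensure the situation described is non-vacuous.
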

As an example of where the above situation occurs, let us consider $I \subseteq \R$ an interval, $U \coloneqq L^p(I)$ with $1<p<\infty$, $V \subseteq \R$ with $\# V<\infty$, and let us denote with $\mathcal{D} \subseteq U$ the set of
functions on $I$ taking values in $V$. Then $\mathcal{D}$ is sequentially weakly pre-compact, and it turns out that its $L^p$-weak closure is $\overline{\mathcal{D}}^{w} =\{u\in L^p(I): \min V\leq u \leq \max V \, \mbox{a.e.} \}$.
Furthermore, when dealing with a functional $H_\alpha = F + \alpha G$, one in general expects its minimizers to take infinitely many values, so that \cref{eq:D_far_away} holds true, since
\begin{equation*} 
    \inf_{v \in \mathcal{D}} \|u - v\|_{L^p}^p = \int_{I} \inf_{v \in V}|u(x) - v|^p \, \mathrm{d}x
\end{equation*}
for every $u\in U$.
By \cref{prop:negative}, in such a scenario there cannot be a minimizing sequence $(u_i)_{i\in\NN}\subset \mathcal{D}$ for $H_\alpha$,  for all but countable $\alpha \in (0, +\infty)$.

\subsection{\texorpdfstring{$\Gamma$}{Gamma}-convergence with respect to the weak topology}

We shall show how~\cref{cor:strong_convergence} affects the sequences obtained by considering quasi-minimizers of $\Gamma$-convergent functionals that contain a weak-to-strong regularization term.
For a thorough introduction to $\Gamma$-convergence, we refer to the textbook~\cite{D93}.
Here, we assume that $U$ is a reflexive Banach space with a separable dual $U'$.
In this framework, following~\cite[Chapter~8]{D93}, given $H\colon U\to \Rext$ and $H^i\colon U\to \Rext$ for $i\in\NN$, we can characterize the $\Gamma$-convergence with respect to the weak topology as follows: The sequence $(H^i)_{i\in\NN}$ is $\Gamma$-converging to $H$ if the following two conditions hold:
\begin{itemize}
    \item ($\liminf$ inequality) For every $u\in U$ and for every $(u_i)_{i\in\NN}$ that is weakly convergent to $u$ we have
          \begin{equation*}
              H(u) \leq \liminf_{i\to\infty} H^i(u_i).
          \end{equation*}
    \item ($\limsup$ inequality) For every $u\in U$, there exists a sequence $(u_i)_{i\in\NN}$ that is weakly convergent to $u$ and such that
          \begin{equation*}
              H(u) \geq \limsup_{i\to\infty} H^i(u_i).
          \end{equation*}
\end{itemize}
Finally, we recall that the functionals $(H^i)_{i\in\NN}$ are \emph{weakly equi-coercive} if, for every $t\in\R$, there exists $K\subset U$ weakly compact such that $\{ u\in U: H^i(u)\leq t\}\subset K$ for every $i\in\NN$.

\begin{proposition} \label{prop:G-conv}
    Let $U$ be a reflexive Banach space with separable dual $U'$.
    Let us consider $F\colon U \to \Rext$ such that $F\not\equiv +\infty$ and $G\colon U \to \R$ a \emph{weak-to-strong} functional, and let $H_\alpha\coloneqq F + \alpha G$ be the regularized functional, with $\alpha\in(0,+\infty)$.
    Then, for all but countable $\alpha \in (0, +\infty)$, the following holds: \\   Let $(H_\alpha^i)_{i\in\NN}$ be a sequence of weakly equi-coercive functionals that are $\Gamma$-convergent to $H_\alpha$ with respect to the weak convergence of $U$, and let $(u_i)_{i\in\NN}$ be a sequence in $U$ that satisfies
    \begin{equation} \label{eq:quasi-minim}
        H_\alpha^i(u_i)\leq \inf_{U} H_\alpha^i+ \e_i \quad \forall \, i\in\NN  \qquad \mbox{with }\qquad
        \lim_{i\to\infty}\e_i=0.
    \end{equation}
    If $(u_i)_{i\in\NN}$ \emph{is a minimizing sequence for $H_\alpha$}, then $(u_i)_{i\in\NN}$ is \emph{strongly pre-compact}.
\end{proposition}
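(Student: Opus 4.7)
The plan is to combine the standard equi-coercivity-plus-$\Gamma$-convergence machinery with \cref{cor:strong_convergence}, which is precisely designed to upgrade weak convergence of a minimizing sequence to strong convergence. I would restrict $\alpha \in (0,+\infty)$ to lie outside the (at most countable) exceptional set of \cref{cor:strong_convergence}, and show that every subsequence of $(u_i)_{i\in\NN}$ admits a further, strongly convergent sub-subsequence; this yields strong pre-compactness of the full sequence.

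First, I would establish weak pre-compactness of $(u_i)_{i\in\NN}$. Since $F\not\equiv+\infty$, pick $u_0\in U$ with $H_\alpha(u_0)<+\infty$; the $\limsup$ inequality at $u_0$ bounds $\inf_U H_\alpha^i$ from above in the limit, and by the $\liminf$ inequality applied to any weakly convergent sequence together with weak equi-coercivity, one recovers the classical conclusion $\inf_U H_\alpha^i \to \inf_U H_\alpha$. Combined with~\eqref{eq:quasi-minim}, this gives a uniform upper bound on $H_\alpha^i(u_i)$, so weak equi-coercivity confines $(u_i)_{i\in\NN}$ to a weakly compact subset of $U$. Reflexivity of $U$ with separable dual $U'$ makes this set metrizable and hence sequentially compact.

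Next, I would pick an arbitrary subsequence $(u_{i_k})_{k\in\NN}$ and, by the previous step, extract a further (non-relabelled) subsequence with $u_{i_k} \weak u^\star$ for some $u^\star\in U$. The $\liminf$ inequality yields
\begin{equation*}
H_\alpha(u^\star) \leq \liminf_{k\to\infty} H_\alpha^{i_k}(u_{i_k}) \leq \lim_{k\to\infty}\bigl(\inf_U H_\alpha^{i_k}+\e_{i_k}\bigr) = \inf_U H_\alpha,
\end{equation*}
so $u^\star\in H_\alpha^\star$. The extracted subsequence is itself a minimizing sequence for $H_\alpha$ (being a subsequence of the minimizing sequence $(u_i)_{i\in\NN}$ granted by hypothesis), and it converges weakly to the minimizer $u^\star$. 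Thus \cref{cor:strong_convergence} applies and directly yields $u_{i_k}\to u^\star$ strongly in $U$.

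Since every subsequence of $(u_i)_{i\in\NN}$ admits a strongly convergent further subsequence, $(u_i)_{i\in\NN}$ is strongly pre-compact, as claimed. I do not foresee a genuine obstacle: the only auxiliary inputs beyond \cref{cor:strong_convergence} are the classical facts that, for weakly equi-coercive $\Gamma$-convergent sequences, $\inf_U H_\alpha^i \to \inf_U H_\alpha$ and every weak cluster point of a quasi-minimizing sequence belongs to $H_\alpha^\star$ (see~\cite{D93}). The substantive content, namely the upgrade from weak to strong convergence, is delegated entirely to \cref{cor:strong_convergence}.
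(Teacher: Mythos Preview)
Your proposal is correct and follows essentially the same approach as the paper: you use weak equi-coercivity and $\Gamma$-convergence to obtain weak pre-compactness and that every weak cluster point is a minimizer, then invoke \cref{cor:strong_convergence} to upgrade weak to strong convergence. The paper's proof is slightly terser (it does not explicitly invoke $\inf_U H_\alpha^i \to \inf_U H_\alpha$ but bounds the $\limsup$ of the infima directly), yet the structure and the key inputs are identical.
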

\begin{proof}
    The thesis follows from \cref{cor:strong_convergence}, and from the fact that $(u_i)_{i\in\NN}$ is weakly pre-compact and that any limiting point is a minimizer for $H_\alpha$.
    The latter is a well-established result (see~\cite[Chapter~8]{D93}), but we prove it here for completeness.
    Since $H_\alpha\not\equiv +\infty$, there exists $\hat u\in U$ such that
    \begin{equation*}
        +\infty> H_\alpha(\hat u) \geq \limsup_{i\to\infty} H^i_\alpha(\hat u_i) \geq \limsup_{i\to\infty} \inf_U H^i_\alpha,
    \end{equation*}
    where $(\hat u_i)_{i\in\NN}$ is the sequence prescribed for $\hat u$ by the $\limsup$ condition. Therefore, by virtue of the equi-coercivity of the converging functionals, we deduce that there exists a weakly compact set $K$ that contains the sequence $(u_i)_{i\in\NN}$ defined through \cref{eq:quasi-minim}. Then, let us extract a subsequence $(u_{i_j})_{j\in\NN}$ that is weakly converging to a point $u^\star$. We want to show that $H_\alpha(u^\star)=\inf_U H_\alpha$.
    To see that, let us consider any $\hat u\in U$, and let $(\hat u_i)_{i\in\NN}$ be the sequence prescribed for $\hat u$ by the $\limsup$ condition.
    Hence, we have:
    \begin{equation*}
        H_\alpha(\hat u) \geq \limsup_{j\to\infty} H_\alpha^{i_j}(\hat u_{i_j})
        \geq \limsup_{j\to\infty} \inf_U H_\alpha^{i_j}
        = \limsup_{j\to\infty} H_\alpha^{i_j}(u_{i_j})
        \geq \liminf_{j\to\infty} H_\alpha^{i_j}(u_{i_j})
        \geq H_\alpha (u^\star),
    \end{equation*}
    where we used \cref{eq:quasi-minim} in the equality, and the $\liminf$ condition for $u^\star$ in the last inequality. Since $H_\alpha(\hat u)\geq H_\alpha(u^\star)$ for every $\hat u \in U$, we deduce that $u^\star$ is a minimizer.
\end{proof}



\subsection*{Acknowledgements}
The authors want to sincerely thank G.~Auricchio, M.~Viola and M.~Tamburro for the helpful comments on the early versions of this manuscript.
A.S.~acknowledges partial support from INdAM--GNAMPA.

\bibliographystyle{abbrv}
\bibliography{biblio}

\end{document}